\newtheorem{thm}{Theorem}[section]
\newtheorem{prop}[thm]{Proposition}
\newtheorem{lemma}[thm]{Lemma}
\newtheorem{defn}[thm]{Definition}
\newtheorem{example}[thm]{Example}
\newtheorem{remark}[thm]{Remark}
\numberwithin{equation}{section}
\begin{document}{\allowdisplaybreaks[4]

\title[Hard Lefschetz actions in  Riemannian geometry]{Hard Lefschetz actions  in   Riemannian  geometry with special holonomy}

\author{Naichung Conan Leung}
\address{The Institute of Mathematical Sciences and Department of Mathematics\\
           The Chinese University of Hong Kong\\ Shatin, Hong Kong}
\email{leung@ims.cuhk.edu.hk}
\thanks{  }

\author{Changzheng Li}
\address{The Institute of Mathematical Sciences and Department of Mathematics\\           The Chinese University of Hong Kong\\
                  Shatin, Hong Kong} \email{czli@math.cuhk.edu.hk}


\date{
      }




\begin{abstract}
     It is known that the
     hard Lefschetz action, together with
     K\"ahler identities for K\"ahler (resp. hyperk\"ahler)
     manifolds, determines a $\mathfrak{su}(1,1)_{sup}$
     (resp. $\mathfrak{sp}(1,1)_{sup}$) Lie superalgebra action on
     differential forms. In this paper, we explain the geometric
     origin of this action, and we also generalize it to manifolds
     with other holonomy groups.

       For semi-flat Calabi-Yau (resp. hyperk\"ahler) manifolds,
       these symmetries can be enlarged to a
       $\mathfrak{so}(2,2)_{sup}$ (resp.
       $\mathfrak{su}(2,2)_{sup}$) action.

\end{abstract}

\maketitle


\section{Introduction}

  Lefschetz's work (see e.g. \cite{af}) related the topology of a complex
  projective manifold $M$ with its hyperplane section. In modern
  terminology, this implies the cohomology group of $M$ admits a
  natural $\mathfrak{sl}(2,\mathbb{R})$ action. This is the
  celebrated hard Lefschetz theorem. Hodge (see e.g. \cite{gri})
  reinterpreted this action on the level of differential forms
  $\Omega^\bullet(M)$ which commutes with Laplacian operator. Thus
  the hard Lefschetz theorem follows from the Hodge theorem.
  Furthermore if we consider the vector space $\mathbb{C}^2\oplus
  \mathbb{R}$ spanned by $\partial, \bar\partial, \partial^*,
  \bar\partial^*$ and $\Delta$, then all K\"ahler identities, for
  instances $[L, \partial^*]=i\bar\partial$ and
  $\Delta=2\Delta_{\bar\partial}$, can be combined with the hard
  Lefschetz action to give a Lie superalgebra action of
  $\mathfrak{sl}(2,\mathbb{R})\oplus \mathbb{C}^2\oplus \mathbb{R}$
  on $\Omega^\bullet(M)$.

     There is an analogous   theorem for hyperk\"ahler
     manifolds $M$, namely there is a Lie superalgebra action of
     $\mathfrak{so}(4,1)\oplus\mathbb{C}^4\oplus\mathbb{R}$ on $\Omega^\bullet(M)$.
     The $\mathfrak{so}(4,1)$ part of this action  on $H^*(M)$, by zeroth order
     operators,  was discovered by Verbitsky in \cite{verb}. Following a
     suggestion of Witten, Figueroa-O'Farrill, K\"{o}hl
    and Spence \cite{fks} gave a physical interpretation of all these
    actions in terms of supersymmetric algebra in sigma models. It was
    further studied by Cao and Zhou in \cite{caozhou}.

  The followings are two natural questions which will be answered in
  this paper: (1) What is the geometric origin of these Lie
  superalgebra actions on the spaces of differential forms on
  K\"ahler manifolds (i.e. $U(n)$ holonomy) and hyperk\"ahler
  manifolds (i.e. $Sp(n)$ holonomy)? (2) Are there analogous hard
  Lefschetz type results for manifolds with other holonomy groups,
  for example quaternionic-K\"ahler manifolds, $G_2$-manifolds and
  $Spin(7)$-manifolds?

   In \cite{leung} the first author revisited the Berger
   classification of holonomy groups of Riemannian manifolds which
   are not locally symmetric spaces. Given any normed
   algebra $\mathbb{K}$, which must be one of $\mathbb{R},
   \mathbb{C}, \mathbb{H}$ and $\mathbb{O}$, we defined the notion
   of $\mathbb{K}$-manifolds. Their holonomy groups are precisely
   $O(n), U(n), Sp(n)Sp(1)$ and $Spin(7)$ respectively. If they are also
   $\mathbb{K}$-oriented, then their holonomy groups reduce to
   $SO(n), SU(n), Sp(n)$ and $G_2$ respectively.

    Note that $\mathfrak{sl}(2, \mathbb{R})\cong \mathfrak{su}(1,1)$
    and $\mathfrak{so}(4, 1)\cong \mathfrak{sp}(1,1)$. For any
    normed algebra $\mathbb{K}$, we could define analogously a Lie
    algebra $\mathfrak{su}_\mathbb{K}(1,1)$ and  a Lie superalgebra
    $\mathfrak{su}_\mathbb{K}(1,1)_{sup}=
  \mathfrak{su}_\mathbb{K}(1,1)\oplus\mathbb{K}^{1,1}\oplus
  \mathbb{R}$.
   On any  $\mathbb{K}$-manifold $M$, we will construct a natural Lie
 superalgebra bundle $E^{\mathfrak{su}}$ with fiber
 $\mathfrak{su}_\mathbb{K}(1,1)_{sup}$.
   To relate this to the hard Lefschetz action, we use the fact that
   differential forms on $M$ can be regarded as spinors for the
   direct sum $T\oplus T^*$ of the tangent and cotangent bundles of
   $M$, which admits a tautological quadratic form of type $(m, m)$.
   Roughly speaking, we have the following bundle,
           $$\mathbb{K}^{n,n}\rightarrow T\oplus T^*\rightarrow M.$$
   Using the Clifford algebra for $\mathbb{K}^{n, n}$ and the Dirac
   operator, we  construct differential operators of
   order zero, one and two on $\Omega^\bullet(M)$.  For example,
   the second order operator is simply the Laplacian operator $\Delta$.  We will show that all these operators
   together with their commutating relations, which in case of K\"ahler manifolds are   the hard Lefschetz action and  K\"ahler identities,
   generate a Lie superalgebra  $\mathfrak{su}_\mathbb{K}(1,1)_{sup}$ action.
   We have

   \begin{thm}
     Let $M$ be an oriented Riemannian manifold. Suppose $M$ is a  $\mathbb{K}$-manifold
     with $\mathbb{K}$ a normed  algebra, i.e. $\mathbb{K}\in\{\mathbb{R}, \mathbb{C}, \mathbb{H}, \mathbb{O}\}$.
     Then there is a Lie superalgebra bundle $E^{\mathfrak{su}}$
     over $M$ with fiber $\mathfrak{su}_\mathbb{K}(1,1)_{sup}$:
            $$\mathfrak{su}_\mathbb{K}(1,1)\oplus
            \mathbb{K}^{1,1}\oplus\mathbb{R}\rightarrow
            E^{\mathfrak{su}}\rightarrow M.$$

      When $\mathbb{K}$ is associative, i.e. $\mathbb{K}\neq
     \mathbb{O}$, each section of  $E^{\mathfrak{su}}\rightarrow M$
     determines a differential operator   of order at most two on
     differential forms on $M$. Thus, we have
        $$\Psi: \Gamma(M, E^{\mathfrak{su}})\rightarrow
          \mathrm{Diff}({\textstyle\bigwedge^\bullet T^*, \bigwedge^\bullet T^*}).$$

 Furthermore, composing $\Psi$ with the symbol map gives a Lie superalgebra homomorphism
         $$\sigma\circ\Psi:\Gamma(M, E^{\mathfrak{su}})\rightarrow
          \mathrm{Symb}({\textstyle\bigwedge^\bullet T^*, \bigwedge^\bullet T^*}).$$

    \end{thm}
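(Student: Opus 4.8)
The plan is to realise all the operators concretely inside the Clifford bundle of $\mathcal{T}:=T\oplus T^{*}$, equipped with its tautological pairing $q$ of signature $(m,m)$, for which $\bigwedge^{\bullet}T^{*}$ is the spinor bundle: Clifford multiplication by a vector field acts as contraction, by a $1$-form as exterior multiplication, and the Dirac operator built from the Levi-Civita connection is $d+d^{*}$. A $\mathbb{K}$-manifold is a reduction of the frame bundle of $T$ to $G_{\mathbb{K}}\in\{O(n),U(n),Sp(n)Sp(1),Spin(7)\}$, and this equips $\mathcal{T}$ with a compatible $\mathbb{K}$-module structure and $\mathbb{K}$-Hermitian form, identified with $T\otimes_{\mathbb{K}}\mathbb{K}^{1,1}$ where $\mathbb{K}^{1,1}=\mathbb{K}e_{+}\oplus\mathbb{K}e_{-}$ carries the split form whose real part is $q$. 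The Lie superalgebra $\mathfrak{su}_{\mathbb{K}}(1,1)_{sup}$ is functorial in $(\mathbb{K}^{1,1},\text{split form})$: I realise its even part $\mathfrak{su}_{\mathbb{K}}(1,1)$ inside $\mathfrak{so}(\mathcal{T})=\bigwedge^{2}\mathcal{T}\subset Cl(\mathcal{T})$ as the span of the fundamental form(s) of the $\mathbb{K}$-structure (the K\"ahler form for $\mathbb{C}$; the triple $\omega_{I},\omega_{J},\omega_{K}$ together with the $\mathfrak{sp}(1)$-action for $\mathbb{H}$), their metric duals in $\bigwedge^{2}T$, and the grading element of $\mathfrak{gl}(T)$; its odd part is $\mathbb{K}^{1,1}$ and its centre is the trivial summand $\mathbb{R}$. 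Since $G_{\mathbb{K}}$ preserves all of this, $E^{\mathfrak{su}}$ is defined as the associated bundle, and its fibrewise bracket — commutator of $\mathbb{K}$-linear endomorphisms on the even part, the module action on even $\times$ odd, the $\mathfrak{su}_{\mathbb{K}}(1,1)$-invariant symmetric pairing $\mathbb{K}^{1,1}\times\mathbb{K}^{1,1}\to\mathbb{R}$ on odd $\times$ odd, and $0$ on the centre — recovers $\mathfrak{su}_{\mathbb{K}}(1,1)_{sup}$ in each fibre. For $\mathbb{K}=\mathbb{O}$ the functoriality passes through the $Spin(7)$-representation on $\mathbb{O}$ rather than through algebra automorphisms, but the bundle is still well-defined.

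Assuming now $\mathbb{K}$ associative, I build $\Psi$ summand by summand and extend $C^{\infty}(M)$-linearly. A section of the $\mathfrak{su}_{\mathbb{K}}(1,1)$-summand is sent to Clifford multiplication, on $\bigwedge^{\bullet}T^{*}$, by the corresponding section of $\bigwedge^{2}\mathcal{T}\subset Cl(\mathcal{T})$; on the generators this produces the Lefschetz operators $L$, their adjoints $\Lambda$, the grading $H$, and the $\mathfrak{sp}(1)$-action, all of order $0$. A section $\lambda$ of the $\mathbb{K}^{1,1}$-summand is sent to the twisted Dirac operator $\mathcal{D}_{\lambda}=\sum_{i}(e_{i}\otimes\lambda)\cdot\nabla_{e_{i}}$, where $\{e_{i}\}$ is a local orthonormal frame of $T$, the element $e_{i}\otimes\lambda$ is viewed in $\mathcal{T}=T\otimes_{\mathbb{K}}\mathbb{K}^{1,1}$, and $\cdot$ is Clifford multiplication; for $\lambda=e_{\pm}$ this is $d$ and $-d^{*}$, and the $\mathbb{K}$-imaginary translates give their twists, which for $\mathbb{K}=\mathbb{C}$ are $\partial,\bar\partial,\partial^{*},\bar\partial^{*}$, all of order $1$. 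The centre is sent to a multiple of $\Delta=(d+d^{*})^{2}$, of order $2$. Associativity is precisely what is needed for $e_{i}\otimes\lambda$, hence $\mathcal{D}_{\lambda}$, to make sense; since the three recipes are $G_{\mathbb{K}}$-equivariant and the Levi-Civita connection is canonical, $\Psi$ is independent of the local trivialisation, and the order bound is immediate.

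For the symbol statement, the principal symbol of a section of $\mathfrak{su}_{\mathbb{K}}(1,1)$ is the order-zero operator itself; of $\lambda\in\mathbb{K}^{1,1}$ it is $\xi\mapsto i\,(\xi^{\sharp}\otimes\lambda)\cdot$, Clifford multiplication by $\xi^{\sharp}\otimes\lambda\in\mathcal{T}$; and of the centre it is $c\,|\xi|^{2}\,\mathrm{id}$. It remains to verify, fibrewise in the symbol Lie superalgebra $\mathrm{Symb}(\bigwedge^{\bullet}T^{*},\bigwedge^{\bullet}T^{*})$ and for every $\xi$, that the graded commutator of these symbols matches the structure of $\mathfrak{su}_{\mathbb{K}}(1,1)_{sup}$: commutators of two degree-two Clifford elements close in degree two and give the bracket of $\mathfrak{su}_{\mathbb{K}}(1,1)$; the commutator of a degree-two with a degree-one element is again degree one and gives the module action (this is the symbol-level, curvature-free avatar of the K\"ahler identities); the anticommutator $(\xi^{\sharp}\otimes\lambda)\cdot(\xi^{\sharp}\otimes\mu)+(\xi^{\sharp}\otimes\mu)\cdot(\xi^{\sharp}\otimes\lambda)=2\,q(\xi^{\sharp}\otimes\lambda,\xi^{\sharp}\otimes\mu)\,\mathrm{id}=|\xi|^{2}\langle\lambda,\mu\rangle\,\mathrm{id}$ by the Clifford relation, where $\langle\cdot,\cdot\rangle$ is the real part of the split form, i.e. the pairing defining the odd $\times$ odd bracket, so this bracket lands in the centre with the correct coefficient; and $|\xi|^{2}\,\mathrm{id}$, being scalar, is central. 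Summing over $\mathbb{K}\in\{\mathbb{R},\mathbb{C},\mathbb{H}\}$ with the appropriate normalisations finishes the proof.

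The step I expect to be the main obstacle is pinning down the embeddings $\mathfrak{su}_{\mathbb{K}}(1,1)\hookrightarrow\bigwedge^{2}\mathcal{T}\subset Cl(\mathcal{T})$ and $\mathbb{K}^{1,1}\hookrightarrow\mathcal{T}\subset Cl(\mathcal{T})$ precisely enough that the above Clifford identities reproduce the \emph{exact} structure constants of $\mathfrak{su}_{\mathbb{K}}(1,1)_{sup}$, uniformly in $\mathbb{K}$; for $\mathbb{K}=\mathbb{H}$ this forces one to track carefully the left $Sp(n)$- versus right $Sp(1)$-actions on $\mathbb{H}$-coefficients in the decomposition $\mathcal{T}=T\otimes_{\mathbb{H}}\mathbb{H}^{1,1}$. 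A related point worth flagging is that $\Psi$ itself is \emph{not} a homomorphism in general — on genuinely non-parallel structures such as quaternionic-K\"ahler manifolds (where the individual fundamental forms $\omega_{I}$ are not parallel) the operator identities acquire curvature correction terms and the centre fails to commute with the $\mathfrak{su}_{\mathbb{K}}(1,1)$-part — so it is exactly the passage to (fibrewise) symbols, which discards the lower-order terms and turns the bracket relations into pointwise Clifford algebra, that makes $\sigma\circ\Psi$ a homomorphism.
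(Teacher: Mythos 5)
Your proposal is correct and is essentially the paper's own argument: differential forms as spinors for $(T\oplus T^*,Q)$, zeroth-order Clifford actions for the even part, twisted Dirac operators for sections of the $\mathbb{K}^{1,1}$-summand, a multiple of $\Delta$ for the centre, and passage to principal symbols to discard the non-parallelism/lower-order corrections (Propositions \ref{psu} and \ref{pdd}), which is exactly why $\sigma\circ\Psi$, rather than $\Psi$ itself, is the Lie superalgebra homomorphism. The only real difference is presentational: where you realise the fibre concretely by the fundamental forms, their metric duals, the grading element and the $\mathfrak{sp}(1)$-span (and flag the uniform matching of structure constants as the main obstacle), the paper defines $\mathfrak{su}_\mathbb{K}(1,1)\subset\mathfrak{sl}(2,\mathbb{K})$ abstractly with the $SO(\mathbb{K})$-twisting action and transports its bracket through the embedding $\iota$ furnished by $T\oplus T^*\cong\mathbb{K}^n\otimes_\mathbb{K}\mathbb{K}^{1,1}$ (Lemma \ref{lemaa}), which makes that matching automatic for associative $\mathbb{K}$.
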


\bigskip

       We call this the \textit{super hard Lefschetz action} for
      $\mathbb{K}$-manifolds.

     When   $E^{\mathfrak{su}}$ is trivial, we can take constant
     sections of $E^{\mathfrak{su}}$ and obtain a Lie superalgebra
     action of $\mathfrak{su}_\mathbb{K}(1,1)_{sup}$ on
     $\Omega^\bullet(M)$.
  This happens when the holonomy group of $M$ is inside $SO(n), U(n)$
  or  $Sp(n)$. When $M$ is compact,  the $\mathfrak{su}_\mathbb{K}(1,1)_{sup}$
    action on $\Omega^\bullet(M)$ descends to the cohomology
    $H^*(M)$ by Hodge theory, for which only $\mathfrak{su}_\mathbb{K}(1,1)$ acts non-trivially on $H^*(M)$.
      Our results apply equally well for every normed algebra.
      However, it is more involved to describe precisely the
      algebraic relations for the super hard Lefschetz action for
      $\mathbb{O}$-manifolds due to the non-associative nature of
      $\mathbb{O}$ (see Theorem \ref{tb} for details).

      For Calabi-Yau manifolds $M$, the ``mirror" of the hard
      Lefschetz action should give us another
      $\mathfrak{sl}(2,\mathbb{R})$-action,  at least in the
      semi-flat limit.  This means that the holonomy group of the Calabi-Yau manifolds can be reduced from $SU(n)\subset  GL(n, \mathbb{C})$ to
      $SU(n)\cap GL(n,\mathbb{R})= SO(n)$ (see Definition \ref{semi}). For instance, $T^n$-invariant Calabi-Yau manifolds \cite{leung2} are examples of such.
       In this circumstance, the hard Lefschetz action and its mirror action  combine together to form a
      $\mathfrak{so}(2,2)$-action on differential forms on $M$
      \cite{leung2}. We can  adapt our method easily to this case
      and obtain an enlarged super hard Lefschetz action for
      semi-flat Calabi-Yau and hyperk\"ahler manifolds.  For hyperk\"ahler manifolds, semi-flatness means that the holonomy group   can be reduced from $Sp(n)$ to
      $Sp(n)\cap GL(n,\mathbb{C})= SU(n)$ (see Definition \ref{semi}). Examples of such include  $T^n$-invariant hyperk\"ahler manifolds \cite{leung2}.
      For   $\mathbb{K}=\mathbb{C}$ or
      $\mathbb{H}$, we write $\mathbb{K}'=\mathbb{R}$ or $\mathbb{C}$  respectively, and we have
  \begin{thm}
       Suppose that $M$ is a semi-flat $\mathbb{K}$-manifold with
       $\mathbb{K}$ being $\mathbb{C}$ or $\mathbb{H}$. Then there
       is a natural $\mathfrak{su}_{\mathbb{K}'}(2,2)_{sup}$ action,
       extending the super hard Lefschetz $\mathfrak{su}_{\mathbb{K}}(1,1)_{sup}$ action,
       on the space of differential forms on $M$ via differential
       operators of order at most two.
     \end{thm}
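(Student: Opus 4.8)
The plan is to repeat the Clifford-algebra construction behind Theorem 1.1 one rung lower on the Cayley--Dickson ladder. By Definition \ref{semi}, semi-flatness means the holonomy of $M$ lies in $SO(n)=SU(n)\cap GL(n,\mathbb{R})$ when $\mathbb{K}=\mathbb{C}$, and in $SU(n)=Sp(n)\cap GL(n,\mathbb{C})$ when $\mathbb{K}=\mathbb{H}$; in both cases this is precisely the subgroup of the $\mathbb{K}$-holonomy group that is already $\mathbb{K}'$-linear, where $\mathbb{K}=\mathbb{K}'\oplus\mathbb{K}' j$ ($j^2=-1$) is the Cayley--Dickson presentation. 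The key structural consequence is that this reduction enlarges the ``internal'' symmetry carried by $T\oplus T^*$: in general the subalgebra of $\mathrm{End}_{\mathbb{R}}(T\oplus T^*)$ commuting with the holonomy is only $\mathbb{K}$ itself, acting by scalars, whereas after the semi-flat reduction it contains the full matrix algebra $M_2(\mathbb{K}')=\mathrm{End}_{\mathbb{K}'}((\mathbb{K}')^2)$ --- the base/fiber splitting of $T^*M$ supplies the second $\mathbb{K}'$-line, which together with $j$ spans $\mathbb{K}$ over $\mathbb{K}'$. Equivalently, the ``scalar'' factor of the tautological split form on $T\oplus T^*$ is upgraded from the rank-one $\mathbb{K}$-Hermitian space $\mathbb{K}^{1,1}$ to the rank-two $\mathbb{K}'$-Hermitian space $(\mathbb{K}')^{2,2}$, the two being isomorphic as real quadratic spaces. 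Hence $\mathfrak{su}_{\mathbb{K}}(1,1)_{sup}$ sits inside $\mathfrak{su}_{\mathbb{K}'}(2,2)_{sup}$ as the centralizer of $j$, and the Lie superalgebra bundle $E'$ with fiber $\mathfrak{su}_{\mathbb{K}'}(2,2)_{sup}$ containing $E^{\mathfrak{su}}$ is flat and trivial, exactly as $E^{\mathfrak{su}}$ is trivial in the $U(n),Sp(n)$ cases of Theorem 1.1 (the holonomy commutes with $M_2(\mathbb{K}')$ by construction and preserves the parallel mirror data); so its constant sections act on $\Omega^\bullet(M)$.

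Next I would run the construction of Theorem 1.1 verbatim with $(\mathbb{K}')^{2,2}$ in place of $\mathbb{K}^{1,1}$ --- the Clifford algebra of the enlarged scalar space and the associated Dirac operator --- and identify the new generators geometrically. Because $\mathbb{K}'\in\{\mathbb{R},\mathbb{C}\}$ is associative there is no non-associativity obstruction (unlike the $\mathbb{O}$ case of Theorem 1.1), so this genuinely produces differential operators on $\Omega^\bullet(M)$, not merely symbols. In addition to the operators already in the image of $\Psi$ --- the hard Lefschetz triple $(L,\Lambda,H)$ of the K\"ahler (resp. hyperk\"ahler) form $\omega$, the first-order Dirac-type operators $\partial,\bar\partial,\partial^*,\bar\partial^*$ and their hyperk\"ahler analogues, and the Laplacian $\Delta$ --- the extra elements of $M_2(\mathbb{K}')$ contribute: a parallel ``mirror'' K\"ahler form $\omega'$, the Legendre dual of $\omega$ obtained by interchanging base and fiber, hence a second Lefschetz triple $(L',\Lambda',H')$; parallel endomorphisms splitting $d$ and its adjoint into ``mirror'' first-order operators $\partial',\bar\partial'$ and their adjoints; and the remaining weight operators filling out a Cartan. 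Each of these is of order $\le 1$ except $\Delta$, which has order $2$, so the ``order at most two'' claim is automatic, and restricting the whole package to $E^{\mathfrak{su}}\subset E'$ returns the super hard Lefschetz action of Theorem 1.1, which is the asserted extension property.

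The substance of the proof is to check that the images of these operators satisfy the brackets of $\mathfrak{su}_{\mathbb{K}'}(2,2)_{sup}$. The relations among the old generators are Theorem 1.1. A mixed bracket of an old and a mirror generator splits into its principal symbol --- dictated by the Clifford relations of $(\mathbb{K}')^{2,2}$ acting on $\bigwedge^\bullet T^*$, which hold formally just as in the proof of Theorem 1.1 --- together with a lower-order remainder controlled by the ``mirror K\"ahler identities'', e.g. the expression of $[\Lambda',\partial]$ and $[L',\partial^*]$ through the mirror Dirac operators and of the $\Delta$-brackets through the mirror structure. These identities hold because $\omega'$ and the mirror complex structure(s) are parallel for the semi-flat connection, which is the semi-flat analogue of the K\"ahler condition; on the zeroth-order part, equivalently on $H^*(M)$ for $M$ compact, they recover the $\mathfrak{so}(2,2)$-action of \cite{leung2} when $\mathbb{K}=\mathbb{C}$ and its analogue with $\mathfrak{so}(4,2)\cong\mathfrak{su}(2,2)$ when $\mathbb{K}=\mathbb{H}$. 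I expect the main obstacle to be bookkeeping rather than anything conceptual: correctly matching the roughly dozen new operators with the root vectors of $\mathfrak{su}_{\mathbb{K}'}(2,2)$, and tracking the signs and the two independent imaginary units --- the complex structure $J$ and the Cayley--Dickson unit $j$, which in the hyperk\"ahler case are genuinely quaternionic and fail to commute --- through the anticommutators of first-order generators that must reproduce $L'$, $\Lambda'$ and $\Delta$.
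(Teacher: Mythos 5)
Your proposal is correct and follows essentially the same route as the paper: semi-flatness lets you rewrite $T\oplus T^*\cong T'\otimes_{\mathbb{K}'}(\mathbb{K}')^{2,2}$, upgrading the scalar factor from $\mathbb{K}^{1,1}$ to the isomorphic real quadratic space $(\mathbb{K}')^{2,2}$, so that $\mathfrak{su}_{\mathbb{K}'}(2,2)$ commutes with the reduced holonomy and acts by parallel zeroth-order (spinorial) operators, while the odd and central parts are the same first- and second-order operators $D_u$ and $\Delta$ as in Theorem 1.1, with the superbrackets checked exactly as there. Only your motivational commutant count is off (before reduction the commutant of the holonomy on $T\oplus T^*$ is already $M_2(\mathbb{K})$ acting on the $\mathbb{K}^{1,1}$ factor, and after reduction it is $\mathrm{End}_{\mathbb{K}'}\bigl((\mathbb{K}')^{2,2}\bigr)\cong M_4(\mathbb{K}')$ rather than $M_2(\mathbb{K}')$), but this does not affect the argument.
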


 This paper is organized as follows. In section 2, we  construct the Lie superalgebra
$\mathfrak{su}_\mathbb{K}(1,1)_{sup}$-bundle $E^{\mathfrak{su}}$
over a $\mathbb{K}$-manifold and introduce a (Lie superalgebra)
bundle morphism $\iota$. In section 3, we construct differential
operators via spin actions, apply them to $\mathbb{K}$-manifolds and
prove our main theorems. In section 4, we obtain
$\mathfrak{so}(2,2)_{sup}$ (resp. $\mathfrak{su}(2,2)_{sup}$) action
on  differential forms on semi-flat Calabi-Yau (resp. hyperk\"ahler)
manifolds. Finally in the appendix, we interpret the differential
operators we constructed in terms of the usual ones.

\section{Lie superalgebra bundles over $\mathbb{K}$-manifolds}

  In this section, we first introduce   the notion of
  a $\mathbb{K}$-manifold in terms of its holonomy group  $G_\mathbb{K}$.   We
  then introduce a Lie superalgebra $\mathfrak{su}_\mathbb{K}(1,1)_{sup}$ and
  construct  a  $\mathfrak{su}_\mathbb{K}(1,1)_{sup}$-bundle $E^{\mathfrak{su}}$ over  any $\mathbb{K}$-manifold.
  Finally, we  show that there
  exists another Lie superalgebra  bundle $E$ over any $\mathbb{K}$-manifold, and we
  introduce a  natural bundle morphism  $\iota:
  E^{\mathfrak{su}}\rightarrow E$.

\subsection{$G_\mathbb{K}$ and $\mathbb{K}$-manifolds}

  A normed algebra $\mathbb{K}$  is  a finite dimensional real algebra
 with unit 1 and a norm $\|\cdot\|$ satisfying $\|a\cdot
 b\|=\|a\|\cdot\|b\|$ for any $a, b\in\mathbb{K}$. It is a classical fact
 that  $\mathbb{K}$ is exactly (isomorphic to) one of the following four algebras: the real $\mathbb{R}$, the
complex $\mathbb{C}$, the quaternion $\mathbb{H}$ and the octonion
$\mathbb{O}$.

   For  $m=n\cdot \dim_\mathbb{R}\mathbb{K}$, where $n=1$ if $\mathbb{K}=\mathbb{O}$, we can identify  $V=\mathbb{R}^m$
   with $\mathbb{K}^n$. The standard metric on $V$
   gives an inner product on $\mathbb{K}^n$ satisfying $g(x\cdot
   \alpha, y\cdot \alpha)=g(x, y)\|\alpha\|^2$ for any $x,y\in V$ and
   $\alpha\in\mathbb{K}$.

 \begin{defn} \label{twist}
  A twisted isomorphism $\phi$ of $V$ is a $\mathbb{R}$-isometry  $\phi$ of $V$
  such that there exists $\theta\in SO(\mathbb{K})$ with the
  property
   $ \phi(x\alpha)=\phi(x)\theta(\alpha)$ for any $x\in
              V$ and any $\alpha\in\mathbb{K}$.
   $\phi$ is called special if it preserves the $``\mathbb{K}$-orientation" in
 terms of $``\lambda_{\mathbb{K}}(\phi)$" as
   defined in \cite{leung}.

   We denote by  $G_{\mathbb{K}}(n)$ (resp. $H_{\mathbb{K}}(n)$) the group of (resp. special) twisted isomorphisms of
   $V$.
 \end{defn}

 \begin{defn}  A  Riemannian
   manifold $(M, g)$ is called a (resp. special) $\mathbb{K}$-manifold,  if the holonomy group
   of its Levi-Civita connection  is a subgroup of $G_\mathbb{K}(n)$ (resp. $H_\mathbb{K}(n)$) with $m=\dim M=n\cdot \dim\mathbb{K}$.
  \end{defn}

  From the viewpoint of normed algebras, (non-locally symmetric) Riemannian manifolds with various
  holonomy groups are  classified as follows \cite{leung}.

 \begin{center}
   \begin{tabular}{|c|l|l|}
     \hline

                                                      & $G_{\mathbb{K}}(n)$  &  $H_{\mathbb{K}}(n)$\\
            \raisebox{1.5ex}[0pt]{$\mathbb{K}$}       & ($\mathbb{K}$-manifolds) & (Special  $\mathbb{K}$-manifolds)
            \\\hline\hline
                                                         & $O(n)$  &  $SO(n)$\\
        \raisebox{1.5ex}[0pt]{$\mathbb{R}$}           & (Riemannian manifolds) & (Oriented Riemannian manifolds)  \\
     \hline
                                                      & $U(n)$  &  $SU(n)$\\
         \raisebox{1.5ex}[0pt]{$\mathbb{C}$}          & (K\"ahler manifolds) & (Calabi-Yau manifolds)  \\
     \hline
                                                      & $Sp(n)Sp(1)$  &  $Sp(n)$\\
          \raisebox{1.5ex}[0pt]{$\mathbb{H}$}         & (Quaternionic-K\"ahler manifolds) & (Hyperk\"ahler manifolds) \\
     \hline
                                                      & $Spin(7)$  &  $G_2$\\
          \raisebox{1.5ex}[0pt]{$\mathbb{O}$}       & (Spin(7)-manifolds) & ($G_2$-manifolds) \\
     \hline

   \end{tabular}
  \end{center}

 \bigskip

  In this paper,  we denote  $G_\mathbb{K}(n)$ (resp. $H_\mathbb{K}(n)$) by
           $G_\mathbb{K}$ (resp. $H_\mathbb{K}$) whenever the dimension is well understood.

  \subsection{$\mathfrak{su}_\mathbb{K}(1,1)_{sup}$-bundles over $\mathbb{K}$-manifolds}
   Let $\mathbb{K}$ be a normed algebra, and $\mathrm{Mat}(2, \mathbb{K})$ be $2\times 2$ matrices with
   entries  in $\mathbb{K}$.

  \subsubsection{\label{sec su11}$ \mathfrak{su}_\mathbb{K}(1,1)_{sup}$}
  Each  matrix $A\in\mathrm{Mat}(2, \mathbb{K})$
   induces a real endomorphism $\phi_A:
  \mathbb{K}^2\rightarrow \mathbb{K}^2; u=(u_1,u_2)\mapsto
  \phi_A(u)=uA^\star$, where  $A^\star\triangleq (\overline{A_{ij}} )^T$.
    Denote $(\mathbb{K}^2, \check q)$ by $\mathbb{K}^{1,1}$,
   where $\check q$ is the quadratic form  of type
 $(\dim_\mathbb{R}\mathbb{K}, \dim_\mathbb{R}\mathbb{K})$ defined by $\check q(u, v)\triangleq\mathrm{Re}\big({1\over 2}(u_1\bar v_2+ u_2
 \bar  v_1)\big)$ for any $u, v\in\mathbb{K}^2$.

  Since $\mathbb{H}$ is non-commutative and $\mathbb{O}$ is the
  worst   for its non-associativity, it
  is a little tricky to  define $\mathfrak{sl}(2, \mathbb{K})$ uniformly.
  Following \cite{baez}, we define $\mathfrak{sl}(2, \mathbb{K})$ to
  be
  the real Lie algebra of operators on $\mathbb{K}^2$ generated by
  $\{\phi_A~|~A_{11}+A_{22}=0,\,\, A=\big(A_{ij}\big)\in\mathrm{Mat}(2,\mathbb{K})\}$.
 And we use the
 following notations:
       \begin{align*}
&\mathfrak{su}_\mathbb{K}(1,1) \triangleq\{\phi\in\mathfrak{sl}(2,
\mathbb{K})~|~
   \check q(\phi(u), v)+\check q(u, \phi(v))=0, \forall u,v\in
   \mathbb{K}^2\};\\
    &\mathfrak{su}_\mathbb{K}(1,1)_{sup} \triangleq
 \mathfrak{su}_\mathbb{K}(1,1)\oplus  \mathbb{K}^{1,1} \oplus
 \mathbb{R}.
 \end{align*}

 In fact, $\mathfrak{sl}(2, \mathbb{K})$ and $\mathfrak{su}_\mathbb{K}(1, 1)$ are isomorphic to classical Lie algebras
 below (see the appendix for more details).

 \begin{center}
   \begin{tabular}{|c||c|c|c|c|}
     \hline
     $\mathbb{K}$ & $\mathbb{R}$ & $\mathbb{C}$ & $\mathbb{H}$ & $\mathbb{O}$
     \\\hline
      $\mathfrak{sl}(2, \mathbb{K})$ & $\mathfrak{so}(2,1)$ & $\mathfrak{so}(3,1)$ & $\mathfrak{so}(5,1)$ & $\mathfrak{so}(9,1)$
      \\\hline
     $\mathfrak{su}_\mathbb{K}(1,1)$ & $\mathfrak{so}(1,1)$ & $\mathfrak{so}(2,1)$ & $\mathfrak{so}(4,1)$ & $\mathfrak{so}(8,1)$ \\
     \hline
   \end{tabular}
  \end{center}
  \bigskip
  Furthermore,  $\mathfrak{su}_\mathbb{K}(1,1)_{sup}$ is naturally a Lie superalgebra  because of the following
   remark.
 \begin{remark}\label{ra}  Let $Q$ be a quadratic form on a real vector space $W$, and let
    $\mathfrak{a}$ be a Lie subalgebra of $\mathfrak{so}(W, Q)$.
   Then  $\mathfrak{a}\oplus W\oplus
    \mathbb{R}$ is naturally a Lie superalgebra  with  the following super Lie bracket:
     $\forall \phi, \psi\in \mathfrak{a}, \forall u, v\in W, \forall a, b\in\mathbb{R}$,
   $$[\phi+a, \psi+b]=\phi\psi-\psi\phi,\quad
      [u, v]= -2 Q(u, v),\quad  [\phi+a, u]=\phi(u).$$
 \end{remark}

 \subsubsection{\label{sec gk}$ \mathfrak{su}_\mathbb{K}(1,1)_{sup}$-bundles}
   Let $(M, g)$ be a $\mathbb{K}$-manifold.
   Since Hol$(g)\subset G_\mathbb{K}$,
      its frame bundle can be reduced to a principal $G_\mathbb{K}$-bundle
      $P_{G_\mathbb{K}}$

  By Definition \ref{twist},  there exists a unique $\theta\in SO(\mathbb{K})$ associated
        to $\phi \in  G_\mathbb{K}$. In fact, it induces an action $\Phi$ of $G_\mathbb{K}$ on $\mathbb{K}^{1,1}$
         by
        $\phi\cdot u\triangleq(\theta(u_1), \theta(u_2))$ for any  $u\in
        \mathbb{K}^{1,1}$. It is easy to show that
           $\Phi(G_\mathbb{K})\subset SO(\mathbb{K}^2, \check q)$
           and that $Ad\circ  \Phi$  preserves the Lie subalgebra $\mathfrak{su}_\mathbb{K}(1,1)\subset  \mathfrak{so}(\mathbb{K}^2, \check q)$.
         Therefore, $\Phi$ induces an action $Ad\circ  \Phi$ of
         $G_\mathbb{K}$ on $\mathfrak{su}_\mathbb{K}(1,1)$.
        We take the trivial action of $G_\mathbb{K}$ on
        $\mathbb{R}$, and simply denote by $\Phi$ all these actions. Hence, there exist  the following associated bundles over the  $\mathbb{K}$-manifold $M$:
     $$E^{\mathfrak{su}}_0 \triangleq   P_{G_\mathbb{K}}\times{}_{\Phi}
       \mathfrak{su}_\mathbb{K}(1,1),\quad     E^{\mathfrak{su}}_1 \triangleq
      P_{G_\mathbb{K}}\times{}_{\Phi} \mathbb{K}^{1,1},\quad
       E^{\mathfrak{su}}_2\triangleq     P_{G_\mathbb{K}}\times{}_{\Phi} \mathbb{R}.
             $$
    Note  that $\Phi$ preserves the super Lie bracket of $\mathfrak{su}_\mathbb{K}(1,1)_{sup}$, we have
  \begin{prop}
    There   exists a Lie superalgebra bundle
        $E^{\mathfrak{su}}= E^{\mathfrak{su}}_0\oplus  E^{\mathfrak{su}}_1 \oplus E^{\mathfrak{su}}_2$ over any  $\mathbb{K}$-manifold
        $M$ with fiber $\mathfrak{su}_\mathbb{K}(1,1)_{sup}$.
  \end{prop}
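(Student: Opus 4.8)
The plan is to exhibit $E^{\mathfrak{su}}$ as the bundle associated to the reduced frame bundle $P_{G_\mathbb{K}}$ via the representation $\Phi$ on the whole graded vector space $\mathfrak{su}_\mathbb{K}(1,1)_{sup}=\mathfrak{su}_\mathbb{K}(1,1)\oplus\mathbb{K}^{1,1}\oplus\mathbb{R}$, and then to check that $\Phi$ acts by \emph{Lie superalgebra automorphisms}. Once this is verified, the associated-bundle formalism transports the fiberwise super Lie bracket consistently across frames, so that $E^{\mathfrak{su}}=P_{G_\mathbb{K}}\times_\Phi\mathfrak{su}_\mathbb{K}(1,1)_{sup}$ becomes a bundle of Lie superalgebras, and the decomposition $E^{\mathfrak{su}}=E^{\mathfrak{su}}_0\oplus E^{\mathfrak{su}}_1\oplus E^{\mathfrak{su}}_2$ is just the one induced by $\mathfrak{su}_\mathbb{K}(1,1)\oplus\mathbb{K}^{1,1}\oplus\mathbb{R}$, which $\Phi$ visibly preserves.

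The heart of the argument is therefore the $\Phi$-equivariance of the super Lie bracket of Remark \ref{ra} applied to $\mathfrak{a}=\mathfrak{su}_\mathbb{K}(1,1)\subset\mathfrak{so}(\mathbb{K}^2,\check q)$ and $W=\mathbb{K}^{1,1}$. Fix $g\in G_\mathbb{K}$ with associated $\theta\in SO(\mathbb{K})$, so that $\Phi(g)$ acts on $\mathbb{K}^2$ diagonally by $\theta$ and trivially on $\mathbb{R}$. I would check the three defining relations in turn. First, on $\mathfrak{su}_\mathbb{K}(1,1)\oplus\mathbb{R}$ the bracket is the operator commutator $[\phi,\psi]=\phi\psi-\psi\phi$ (with $\mathbb{R}$ central); since $(Ad\circ\Phi)(g)$ is conjugation $\phi\mapsto\Phi(g)\,\phi\,\Phi(g)^{-1}$ by the invertible operator $\Phi(g)$, it is an automorphism of the endomorphism algebra of $\mathbb{K}^2$, hence preserves commutators, and it fixes $\mathbb{R}$; that it maps $\mathfrak{su}_\mathbb{K}(1,1)$ to itself has already been recorded. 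Second, the bracket $[u,v]=-2\check q(u,v)\in\mathbb{R}$ is preserved because $\Phi(G_\mathbb{K})\subset SO(\mathbb{K}^2,\check q)$ and $\Phi$ acts trivially on $\mathbb{R}$, so $\check q(\Phi(g)u,\Phi(g)v)=\check q(u,v)$. Third, the mixed bracket $[\phi+a,u]=\phi(u)\in W$ is preserved since $\Phi(g)\bigl(\phi(u)\bigr)=\Phi(g)\,\phi\,\Phi(g)^{-1}\bigl(\Phi(g)u\bigr)=\bigl((Ad\circ\Phi)(g)\phi\bigr)\bigl(\Phi(g)u\bigr)$. Together these show $\Phi(g)$ is a Lie superalgebra automorphism, and it is invertible because $G_\mathbb{K}$ is a group.

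With the equivariance in hand, the conclusion follows from standard associated-bundle theory: the transition functions of $E^{\mathfrak{su}}$ are fiberwise Lie superalgebra automorphisms, so the pointwise superbracket on fibers glues to a well-defined smooth bundle map $E^{\mathfrak{su}}\otimes E^{\mathfrak{su}}\to E^{\mathfrak{su}}$ that is super-antisymmetric and satisfies the super Jacobi identity fiberwise, i.e. $E^{\mathfrak{su}}$ is a Lie superalgebra bundle with fiber $\mathfrak{su}_\mathbb{K}(1,1)_{sup}$. The only point requiring genuine care is the case $\mathbb{K}=\mathbb{O}$: there $\mathfrak{sl}(2,\mathbb{O})$, hence $\mathfrak{su}_\mathbb{K}(1,1)$, is defined through a generating set of operators rather than by an explicit matrix model, so one must know that conjugation by $\Phi(g)$ preserves the generated Lie algebra; this is exactly the assertion ``$Ad\circ\Phi$ preserves $\mathfrak{su}_\mathbb{K}(1,1)\subset\mathfrak{so}(\mathbb{K}^2,\check q)$'' established just before the Proposition, so modulo it the remainder is a routine unwinding of the associated-bundle construction.
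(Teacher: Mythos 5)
Your proposal is correct and follows essentially the same route as the paper: the paper constructs $E^{\mathfrak{su}}$ exactly as the associated bundle $P_{G_\mathbb{K}}\times_\Phi\mathfrak{su}_\mathbb{K}(1,1)_{sup}$ and justifies the superalgebra-bundle structure by the single remark that $\Phi$ preserves the super Lie bracket. Your explicit verification of the three bracket relations (and your note on the $\mathbb{K}=\mathbb{O}$ case resting on the previously recorded fact that $Ad\circ\Phi$ preserves $\mathfrak{su}_\mathbb{K}(1,1)$) simply fills in the details the paper leaves to the reader.
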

   \begin{example}\label{etrivial}
         The action of  $G_\mathbb{K}$ (resp. $H_{\mathbb{K}}$)  on
         $\mathfrak{su}_\mathbb{K}(1,1)_{sup}$ is trivial, if and
         only if $\mathbb{K}=\mathbb{R}$ or $\mathbb{C}$ (resp. $\mathbb{K}=\mathbb{R}, \mathbb{C}$ or
         $\mathbb{H}$).   Therefore,
         $E^{\mathfrak{su}}$ is  trivial, if and only if  Hol$(g)\subset O(n), U(n)$ or
         $Sp(n)$.
       \end{example}

\subsection{Lie superalgebra bundle morphisms over $\mathbb{K}$-manifolds}

\subsubsection{\label{sec q}$\mathcal{L}$-bundles over $\mathbb{K}$-manifolds}

  Let $g$ be an inner product on a real vector space $V$,
       and let $Q$ be the natural quadratic form on $W=V\oplus V^*$ given by
  $$Q(X+\xi, Y+\eta)\triangleq {\eta(X)+\xi(Y)\over 2}$$
  for any $X, Y\in V $ and any $\xi,\eta\in V^*$.
   It induces a quadratic form $\hat Q$ 
   on Hom$(V^*, W)\cong V\otimes   W$ given by $\hat Q(v_1\otimes w_1, v_2\otimes w_2)\triangleq
   g(v_1, v_2)Q(w_1,w_2)$. Note that the induced action of
   $\mathfrak{so}(W, Q)$ on $V\otimes W$ preserves $\hat Q$.
   Hence, it follows from Remark \ref{ra} that  $$\mathcal{L}\triangleq\mathfrak{so}(W, Q)\oplus
   \mathrm{Hom}(V^*, W)\oplus \mathbb{R}$$ is naturally a Lie superalgebra.

     Let $(M, g)$ be a $\mathbb{K}$-manifold of real dimension $m$.
     The natural action of $G_\mathbb{K}\subset O(m)$ on $V=\mathbb{R}^m$
     induces actions on
       $\mathfrak{so}(W, Q)$, $\mathrm{Hom}(V^*, W)$ and $\mathbb{R}$ respectively in the standard way, which we
       also denote by $\Phi$. Hence, there exist the following associated vector bundles over $M$:
          $$E_0 \triangleq    P_{G_\mathbb{K}}\times{}_{\Phi}\mathfrak{so}(W, Q),
                                       \quad E_1\triangleq P_{G_\mathbb{K}}\times{}_{\Phi}\mathrm{Hom}(V^*, W),
                                       \quad E_2\triangleq
                                       P_{G_\mathbb{K}}\times{}_{\Phi}\mathbb{R}.\,\,$$
    In fact,  $E_0=\bigwedge^2( T\oplus T^*), E_1=\mathrm{Hom}(T^*,
T\oplus T^*)$ and $E_2$ is a trivial line bundle. From the above
discussion, we have the following proposition.

    \begin{prop}
    There   exists a natural Lie superalgebra bundle
        $E=E_0\oplus E_1\oplus E_2$  over any  $\mathbb{K}$-manifold
        $M$ with fiber $\mathcal{L}$.
  \end{prop}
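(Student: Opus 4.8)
The plan is to reduce the statement to a single equivariance check. A Lie superalgebra bundle with fiber $\mathcal{L}$ is produced from the associated-bundle construction $P_{G_\mathbb{K}}\times{}_{\Phi}\mathcal{L}$ as soon as the structure group $G_\mathbb{K}$ acts on $\mathcal{L}$ by Lie superalgebra automorphisms: in that case the fiberwise super Lie bracket of $\mathcal{L}$ is compatible with all transition functions and descends to a well-defined bundle morphism $E\times_M E\to E$ which inherits bilinearity, the appropriate symmetry, and the super-Jacobi identity fiberwise. Since $\mathcal{L}=\mathfrak{so}(W,Q)\oplus \mathrm{Hom}(V^*,W)\oplus\mathbb{R}$ is already a Lie superalgebra by Remark~\ref{ra} (taking $\mathfrak{a}=\mathfrak{so}(W,Q)$ acting on $\mathrm{Hom}(V^*,W)\cong V\otimes W$ and preserving $\hat Q$), and $E=E_0\oplus E_1\oplus E_2$ with $E_i$ associated to the three summands $\mathcal{L}_0=\mathfrak{so}(W,Q)$, $\mathcal{L}_1=\mathrm{Hom}(V^*,W)$, $\mathcal{L}_2=\mathbb{R}$, the whole proof amounts to verifying that $\Phi\colon G_\mathbb{K}\to GL(\mathcal{L})$ lands in the automorphism group of the super Lie bracket.

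First I would unwind the action. An element $\phi\in G_\mathbb{K}(n)\subset O(V)$ acts on $V=\mathbb{R}^m$ as an isometry, hence on $V^*$ via the metric identification, hence diagonally on $W=V\oplus V^*$; call this induced map $\phi_W$. Because $Q(X+\xi,Y+\eta)=\tfrac12(\eta(X)+\xi(Y))$ is built only from the canonical pairing $V\otimes V^*\to\mathbb{R}$, one has $\phi_W\in O(W,Q)$. Consequently $\Phi$ acts on $\mathcal{L}_0=\mathfrak{so}(W,Q)$ by conjugation $A\mapsto \phi_W A\phi_W^{-1}$, on $\mathcal{L}_1=\mathrm{Hom}(V^*,W)$ by $f\mapsto \phi_W\circ f\circ \phi_{V^*}^{-1}$, and on $\mathcal{L}_2=\mathbb{R}$ trivially.

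Then the verification that these three pieces assemble into a super-Lie-algebra automorphism splits into exactly the three bracket relations of Remark~\ref{ra}. The bracket $[A+a,B+b]=AB-BA$ on $\mathcal{L}_0$ is preserved since conjugation by the fixed operator $\phi_W$ is an algebra automorphism of $\mathrm{End}(W)$. The odd bracket $[f,h]=-2\hat Q(f,h)\in\mathbb{R}$ is preserved because $\Phi$ is trivial on $\mathbb{R}$ while $\phi_W$ preserves $Q$ and $\phi$ preserves $g$, so that $\hat Q(v_1\otimes w_1,v_2\otimes w_2)=g(v_1,v_2)Q(w_1,w_2)$ is invariant under the diagonal action on $V\otimes W$. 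The mixed bracket $[A+a,f]=A\circ f$ is preserved by the one-line identity $\phi_W(A\circ f)\phi_{V^*}^{-1}=(\phi_W A\phi_W^{-1})\circ(\phi_W f\phi_{V^*}^{-1})$. Hence $\Phi(G_\mathbb{K})$ preserves the super Lie bracket of $\mathcal{L}$, so $E$ acquires a fiberwise super Lie bracket; the identifications $E_0=\bigwedge^2(T\oplus T^*)$ (via $\mathfrak{so}(W,Q)\cong\bigwedge^2 W$), $E_1=\mathrm{Hom}(T^*,T\oplus T^*)$ and $E_2$ trivial follow from the corresponding $G_\mathbb{K}$-module identifications already noted. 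I do not expect a genuine obstacle; the only point requiring care is to use consistently the \emph{same} induced orthogonal map $\phi_W$ on $W$ across all three summands, since it is precisely this compatibility that makes the mixed bracket relation $[A,f]=A\circ f$ equivariant.
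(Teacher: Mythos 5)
Your proposal is correct and follows essentially the same route as the paper: the paper's proof is precisely the associated-bundle construction together with the observation that the standard $G_\mathbb{K}\subset O(m)$ action preserves $Q$, hence $\hat Q$, hence the super Lie bracket of $\mathcal{L}$ from Remark \ref{ra}. You have merely written out explicitly the equivariance check that the paper dismisses as "standard," which is a fair and complete filling-in of the same argument.
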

   \subsubsection{\label{sec q2}Lie superalgebra bundle morphisms}

      Let $(M, g)$ be a $\mathbb{K}$-manifold of real dimension $m$.
      Note that $TM=P_{G_\mathbb{K}}\times_\Phi
      V$, where   $V=\mathbb{R}^m$ is identified with
      $\mathbb{K}^n$.

      There
      is a natural monomorphism of Lie algebras
              $$\iota:  \mathfrak{su}_\mathbb{K}(1,1)\hookrightarrow
              \mathfrak{so}(W, Q)$$
       defined
            as  follows. If $\mathbb{K}$ is associative,
     $\iota(L)$ is
      given by the following procedure
      $$\iota(L):  V\oplus   V^*\overset{\psi_1}{\longrightarrow}
                \mathbb{K}^n\oplus\mathbb{K}^n
                 \overset{\psi_2}{\longrightarrow} \mathbb{K}^n   \otimes_\mathbb{K}  \mathbb{K}^{1, 1}
                  \overset{\mathrm{Id}\otimes L}{-\!\!\!-\!\!\!-\!\!\!\longrightarrow} \mathbb{K}^n   \otimes_\mathbb{K}  \mathbb{K}^{1, 1}
               \overset{(\psi_2\circ\psi_1)^{-1}}{-\!\!\!-\!\!\!-\!\!\!-\!\!\!-\!\!\!\longrightarrow}V\oplus        V^*,$$
    where $\psi_1(\mathbf{x}, \mathbf{\xi})=(\mathbf{x}_\mathbb{K}, \mathbf{\xi}_\mathbb{K})$ is
    the natural identification of  $V\oplus V^*$ with $\mathbb{K}^n\oplus \mathbb{K}^n$
    and  $\psi_2(\mathbf{x}_\mathbb{K}, \mathbf{\xi}_\mathbb{K})=
                 \mathbf{x}_\mathbb{K}\otimes (1,0)+\mathbf{\xi}_\mathbb{K}\otimes(0, 1) $ is the natural isomorphism.
            If $\mathbb{K}$ is not associative, in which case we note that  $\mathbb{K}=\mathbb{O}$ and $V\cong \mathbb{O}$,
            then  $\iota(L)$ is given by the following procedure
               $\iota(L):  V\oplus   V^*\overset{\psi_1}{\longrightarrow}
                \mathbb{O}\oplus\mathbb{O}
                     \overset{  L}{\longrightarrow} \mathbb{O}\oplus\mathbb{O}
               \overset{\psi_1 ^{-1}}{-\!\!\!-\!\!\!\longrightarrow}V\oplus        V^*.$

   There is also a natural inclusion
           $\iota: \mathbb{K}^{1, 1}\rightarrow \mathrm{Hom}(V^*,
           W); u=(u_1, u_2)\mapsto \iota(u)$ as defined by the
           following procedure
              $\iota(u): V^*\overset{\psi_1}{\longrightarrow}\mathbb{K}^n \overset{\psi_u}{\longrightarrow}
             \mathbb{K}^n\oplus\mathbb{K}^n
             \overset{\psi_1 ^{-1}}{-\!\!\!-\!\!\!\longrightarrow}V\oplus
             V^*, $
     where $\psi_u(\xi_\mathbb{K})=(\xi_\mathbb{K}u_1,
      \xi_\mathbb{K}u_2)$.  Together with the map $\iota: \mathbb{R}\rightarrow \mathbb{R}$
      given by  $\iota(a)\triangleq ma$, we obtain a map
       $$\iota:\mathfrak{su}_\mathbb{K}(1,1)_{sup}\rightarrow
       \mathcal{L}.$$
       Note that
      the action of $G_\mathbb{K}$   on $\mathcal{L}$
      via the inclusion into $O(m)$ is standard. Then it is straightforward to
      get the following lemma, the proof of which we omit.

 \begin{lemma}\label{lemaa}
   \begin{enumerate}
      \item $G_\mathbb{K}$ preserves the subspace  $\iota(\mathfrak{su}_\mathbb{K}(1,1)_{sup})$ of $\mathcal{L}$.
      \item If $\mathbb{K}$ is associative, $\iota:\mathfrak{su}_\mathbb{K}(1,1)_{sup}\hookrightarrow \mathcal{L}$
                is an injective morphism of  Lie superalgebras.
      If $\mathbb{K}=\mathbb{O}$, $\iota(\mathfrak{su}_\mathbb{K}(1,1))\cdot \iota(\mathbb{O}^{1,1})
                           =\mathrm{Hom}(V^*, V\oplus V^*)$.

    \end{enumerate}
 \end{lemma}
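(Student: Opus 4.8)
The plan is to work fiberwise and reduce both assertions to linear algebra over the model space $W=V\oplus V^{*}$, exploiting that $\iota$ is assembled entirely from the $\mathbb{K}$-structure maps $\psi_{1},\psi_{2},\psi_{u}$ together with the tautological identification $W\cong\mathbb{K}^{n}\otimes_{\mathbb{K}}\mathbb{K}^{1,1}$. For part (1) the heart of the matter is an equivariance property of $\iota$. Given a twisted isomorphism $\phi\in G_{\mathbb{K}}$ with associated $\theta\in SO(\mathbb{K})$, I would first transport $\phi$ to $\mathbb{K}^{n}\oplus\mathbb{K}^{n}$ via $\psi_{1}$: on the $V$-summand it becomes a $\theta$-semilinear map $\tilde\phi$ of $\mathbb{K}^{n}$, and on the $V^{*}$-summand it becomes the same $\tilde\phi$, since $\phi$, being an isometry, coincides with its inverse transpose under the metric identification $V\cong V^{*}$.

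Since $\theta$ is multiplicative with $\theta(1)=1$, pushing this through $\psi_{2}$ shows that the operator is exactly $\tilde\phi\otimes\Phi(\phi)$ on $\mathbb{K}^{n}\otimes_{\mathbb{K}}\mathbb{K}^{1,1}$, where $\Phi(\phi)$ is the componentwise $\theta$-action on $\mathbb{K}^{1,1}$ from \S\ref{sec gk}; the $\theta$-semilinearity of the two factors cancels, so $\tilde\phi\otimes\Phi(\phi)$ is a well-defined $\mathbb{R}$-linear operator. Conjugating the defining formulas for $\iota$ then yields
$$\phi\cdot\iota(L)=\iota\bigl((\mathrm{Ad}\circ\Phi)(\phi)(L)\bigr),\qquad \phi\cdot\iota(u)=\iota\bigl(\Phi(\phi)u\bigr),\qquad \phi\cdot\iota(a)=\iota(a),$$
for $L\in\mathfrak{su}_{\mathbb{K}}(1,1)$, $u\in\mathbb{K}^{1,1}$ and $a\in\mathbb{R}$. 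Because $\Phi$ preserves $\mathfrak{su}_{\mathbb{K}}(1,1)\subset\mathfrak{so}(\mathbb{K}^{2},\check q)$ and $\mathbb{K}^{1,1}$ (as recalled in \S\ref{sec gk}), the subspace $\iota(\mathfrak{su}_{\mathbb{K}}(1,1)_{sup})$ is $G_{\mathbb{K}}$-stable in $\mathcal{L}$. When $\mathbb{K}=\mathbb{O}$ one has $n=1$, the tensor factor is trivial, $\iota(L)=\psi_{1}^{-1}L\psi_{1}$, and the same computation applies verbatim. This proves (1).

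For part (2) in the associative case, injectivity is immediate: $\iota$ maps the three graded summands into the distinct summands $E_{0}=\mathfrak{so}(W,Q)$, $E_{1}=\mathrm{Hom}(V^{*},W)$, $E_{2}=\mathbb{R}$, and on each piece it is visibly injective ($\psi_{2}\psi_{1}$ is an isomorphism and $\mathrm{Id}\otimes(-)$ is injective on a free module; $u\mapsto\psi_{u}$ is injective; $a\mapsto ma$ is injective as $m\geq 1$). That $\iota$ respects the super bracket of Remark \ref{ra} is checked on the three bracket types. On $\mathfrak{su}_{\mathbb{K}}(1,1)$ it is conjugation by a fixed isomorphism followed by $\mathrm{Id}\otimes(-)$, both algebra homomorphisms. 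For the mixed bracket, $\iota(\phi)\circ\iota(u)$ sends $\xi$ to $(\psi_{2}\psi_{1})^{-1}\bigl((\mathrm{Id}\otimes\phi)(\xi_{\mathbb{K}}\otimes u)\bigr)=\iota(\phi(u))(\xi)$, so $[\iota(\phi),\iota(u)]=\iota([\phi,u])$. Finally one must match $[\iota(u),\iota(v)]=-2\hat Q(\iota(u),\iota(v))$ with $\iota([u,v])=-2m\,\check q(u,v)$: expanding $\iota(u)=\sum_{a}f_{a}\otimes\iota(u)(f^{a})$ over an orthonormal basis $\{f_{a}\}$ of $V$ and using that $Q$ corresponds to $g\otimes\check q$ under $\psi_{2}\psi_{1}$, the sum over the $m$ basis directions produces exactly the factor $m$ — which is precisely why $\iota(a)$ was set to $ma$. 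The same form identity also shows $\iota(\mathfrak{su}_{\mathbb{K}}(1,1))\subset\mathfrak{so}(W,Q)$, since $\mathrm{Id}\otimes L$ preserves $g\otimes\check q$ iff $L$ preserves $\check q$.

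For the octonionic case, unwind the definitions with $n=1$, $V=\mathbb{O}$, $W=\mathbb{O}\oplus\mathbb{O}$: $\iota(\mathfrak{su}_{\mathbb{O}}(1,1))$ is the algebra of operators on $\mathbb{O}\oplus\mathbb{O}$ generated by the $\phi_{A}$ with $A_{11}+A_{22}=0$ — i.e.\ by the right multiplications $R_{\alpha}$ on the two summands and the off-diagonal multiplication operators — while $\iota(u)$ sends $\xi$ to $(R_{u_{1}}\xi,R_{u_{2}}\xi)$. Post-composition, which realizes the bracket of $\mathcal{L}$, therefore produces componentwise all operators $R_{\beta}R_{\alpha}$ and their linear combinations. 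The crucial input is the classical fact that the right multiplications of $\mathbb{O}$ satisfy the Clifford relations $R_{\alpha}R_{\beta}+R_{\beta}R_{\alpha}=-2\langle\alpha,\beta\rangle\,\mathrm{Id}$ and hence generate the full matrix algebra $\mathrm{End}_{\mathbb{R}}(\mathbb{O})\cong M_{8}(\mathbb{R})$ — a manifestation of non-associativity, in contrast to the associative case where $\iota$ is a Lie superalgebra morphism and $[\iota(\mathfrak{su}_{\mathbb{K}}(1,1)),\iota(\mathbb{K}^{1,1})]\subseteq\iota(\mathbb{K}^{1,1})$ stays small; assembling the two components gives $\iota(\mathfrak{su}_{\mathbb{O}}(1,1))\cdot\iota(\mathbb{O}^{1,1})=\mathrm{Hom}(V^{*},V\oplus V^{*})$. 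The step I expect to be the main obstacle is exactly this octonionic count: identifying precisely which operators lie in $\iota(\mathfrak{su}_{\mathbb{O}}(1,1))$ and checking that their composites with $\iota(\mathbb{O}^{1,1})$ fill the $128$-dimensional space $\mathrm{Hom}(V^{*},V\oplus V^{*})$ requires a hands-on analysis of the octonion multiplication algebra rather than a formal argument; the equivariance identities in (1) likewise need some care with the left/right $\mathbb{K}$-module conventions built into $\psi_{2}$ and $\psi_{u}$, but this becomes routine once one uses that $\tilde\phi$ and $\Phi(\phi)$ are both $\theta$-semilinear.
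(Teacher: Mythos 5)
The paper itself omits the proof (the lemma is stated as ``straightforward''), so I can only judge your argument on its own terms. For the associative cases your fiberwise computation is sound and is surely what the authors intend: the transport of $\phi$ to $\mathbb{K}^n\oplus\mathbb{K}^n$, the identification of $Q$ with $g\otimes\check q$ under $\psi_2\psi_1$, the mixed-bracket computation $\iota(\phi)\circ\iota(u)=\iota(\phi(u))$, and the orthonormal-frame expansion producing the factor $m$ that matches $\iota(a)=ma$ (the same identity $Q\bigl(\iota(u)(f^a),\iota(v)(f^b)\bigr)=\check q(u,v)\delta_{ab}$ that the paper uses in its symbol computation for Proposition \ref{pdd}) are all correct. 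You should, however, justify rather than assert that $\theta$ is multiplicative: it holds for $\mathbb{R},\mathbb{C}$ because $\theta\in SO(\mathbb{K})$ with $\theta(1)=1$ forces $\theta=\mathrm{id}$, and for $\mathbb{H}$ because such a $\theta$ lies in $SO(\mathrm{Im}\,\mathbb{H})$ and is therefore conjugation by a unit quaternion, hence an algebra automorphism.

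The genuine gap is in the octonionic parts. For $\mathbb{K}=\mathbb{O}$ the twists $\theta$ arising from $Spin(7)$ fill out $SO(7)$, while $\mathrm{Aut}(\mathbb{O})=G_2\subsetneq SO(7)$, so your premise ``$\theta$ is multiplicative'' fails and the ``verbatim'' equivariance $\phi\cdot\iota(L)=\iota\bigl((\mathrm{Ad}\circ\Phi)(\phi)L\bigr)$ cannot be derived this way (indeed, conjugation by the componentwise-$\theta$ operator need not even send a right multiplication $R_{\alpha}$ to a right multiplication, whereas conjugation by $\tilde\phi$ does, via $\tilde\phi R_\alpha\tilde\phi^{-1}=R_{\theta(\alpha)}$, which uses only $\phi(x\alpha)=\phi(x)\theta(\alpha)$). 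Part (1) for $\mathbb{O}$ is still true, but the correct argument is the direct one: conjugating each generator $\phi_A$ of $\mathfrak{su}_\mathbb{O}(1,1)$ by the $W$-action of $\phi$ yields $\phi_{A'}$ with entries transformed by $\theta$, and since the transported form is $\check q$ and $\tilde\phi$ is an isometry, skewness and hence membership in $\iota(\mathfrak{su}_\mathbb{O}(1,1))$ are preserved -- invariance of the subspace, not the stronger intertwining with $\mathrm{Ad}\circ\Phi$. Likewise, the second assertion of (2) is only sketched: citing that the associative algebra generated by all right multiplications is $M_8(\mathbb{R})$ does not suffice, because the lemma concerns single composites $\iota(L)\circ\iota(u)$; one must identify the actual blocks occurring in $\mathfrak{su}_\mathbb{O}(1,1)$ (right multiplications $R_a$, $a\in\mathrm{Im}\,\mathbb{O}$, off the diagonal, and spans of degree-two Clifford elements plus the grading element on the diagonal) and verify that the resulting products, which have Clifford degree at most three, already span $\mathrm{End}(\mathbb{O})^{\oplus 2}$ (here the volume element of $Cl_{0,7}$ acting as $\pm\mathrm{Id}$ is what collapses the higher degrees). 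You flag this count as the main obstacle, correctly, but as written it remains an unproved step.
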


 Thus, there is an
  action  of  $G_\mathbb{K}$ on
  $\mathfrak{su}_\mathbb{K}(1,1)_{sup}$ by viewing it as the subspace
  $\iota(\mathfrak{su}_\mathbb{K}(1,1)_{sup})$ of $\mathcal{L}$. In fact, this action is exactly the same
  as the $G_\mathbb{K}$ action as introduced in section \ref{sec gk}.
   Since all the actions come  out in the standard way, we denote
   all of them by the same notation $\Phi.$
    Consequently, we have an induced vector bundle embedding $$\iota: E^{\mathfrak{su}}\hookrightarrow   E.$$
   Following from Lemma
    \ref{lemaa}, we have
    \begin{prop}
      Let $M$ be a $\mathbb{K}$-manifold.  If $\mathbb{K}$ is associative, then $\iota: E^{\mathfrak{su}}\hookrightarrow
      E$ is an injective Lie superalgebra bundle morphism.
    \end{prop}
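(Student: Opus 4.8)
The plan is to derive the proposition directly from Lemma \ref{lemaa} together with the construction of the bundles $E^{\mathfrak{su}}$ and $E$ as associated bundles to the same principal $G_{\mathbb{K}}$-bundle $P_{G_{\mathbb{K}}}$. First I would recall that, by the discussion preceding the statement, the map $\iota:\mathfrak{su}_{\mathbb{K}}(1,1)_{sup}\to\mathcal{L}$ is $G_{\mathbb{K}}$-equivariant: part (1) of Lemma \ref{lemaa} says $G_{\mathbb{K}}$ preserves $\iota(\mathfrak{su}_{\mathbb{K}}(1,1)_{sup})$, and the remark right after the lemma identifies the induced $G_{\mathbb{K}}$-action on $\mathfrak{su}_{\mathbb{K}}(1,1)_{sup}$ (obtained by transporting the $\mathcal{L}$-action through $\iota$) with the action $\Phi$ of section \ref{sec gk} used to build $E^{\mathfrak{su}}$. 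Hence $\iota$ intertwines the two $\Phi$-actions, so applying the functor $P_{G_{\mathbb{K}}}\times{}_{\Phi}(-)$ yields a well-defined bundle map $\iota:E^{\mathfrak{su}}\to E$ covering the identity of $M$.

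Next I would check injectivity fiberwise: on each fiber the map is a copy of the linear map $\iota:\mathfrak{su}_{\mathbb{K}}(1,1)_{sup}\to\mathcal{L}$, which is injective by part (2) of Lemma \ref{lemaa} when $\mathbb{K}$ is associative (it is there asserted to be an injective morphism of Lie superalgebras; injectivity already follows from the fact that each component map $\mathfrak{su}_{\mathbb{K}}(1,1)\hookrightarrow\mathfrak{so}(W,Q)$, $\mathbb{K}^{1,1}\hookrightarrow\mathrm{Hom}(V^*,W)$, $\mathbb{R}\xrightarrow{\,a\mapsto ma\,}\mathbb{R}$ is injective and the images lie in distinct graded pieces $E_0,E_1,E_2$). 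So the induced bundle morphism is a fiberwise injection, hence a vector subbundle inclusion.

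Finally I would verify that $\iota$ respects the Lie superalgebra bundle structure. Both $E^{\mathfrak{su}}$ and $E$ carry fiberwise super Lie brackets induced from those on $\mathfrak{su}_{\mathbb{K}}(1,1)_{sup}$ and $\mathcal{L}$ (both given by the recipe of Remark \ref{ra}), and these brackets are $\Phi$-invariant by the Propositions of sections \ref{sec gk} and \ref{sec q}. Since, by Lemma \ref{lemaa}(2), the fiber map $\iota$ is a morphism of Lie superalgebras, and the bracket on an associated bundle is computed fiberwise using any local trivialization coming from $P_{G_{\mathbb{K}}}$, the bundle map $\iota$ is a bracket-preserving morphism on each fiber and therefore a Lie superalgebra bundle morphism. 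Combining the three points gives the claim.

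The only genuine content is Lemma \ref{lemaa}, which is assumed; the real (minor) obstacle is bookkeeping — making sure the two a priori different $G_{\mathbb{K}}$-actions on $\mathfrak{su}_{\mathbb{K}}(1,1)_{sup}$ (the intrinsic $Ad\circ\Phi$ action of section \ref{sec gk} and the one pulled back along $\iota$ from $\mathcal{L}$) really coincide, so that ``take constant sections / associated-bundle functoriality'' applies verbatim. This is exactly the sentence ``In fact, this action is exactly the same as the $G_{\mathbb{K}}$ action as introduced in section \ref{sec gk}'' in the excerpt, so once that identification is granted the proposition is immediate and I would simply state: the bundle map $\iota:E^{\mathfrak{su}}\hookrightarrow E$ is the one induced by applying $P_{G_{\mathbb{K}}}\times{}_{\Phi}(-)$ to the equivariant fiber map of Lemma \ref{lemaa}, and it is an injective Lie superalgebra bundle morphism because the fiber map is.
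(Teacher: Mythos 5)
Your argument is correct and is essentially the paper's own: the proposition is stated there as an immediate consequence of Lemma \ref{lemaa} together with the observation that the $G_{\mathbb{K}}$-action pulled back along $\iota$ from $\mathcal{L}$ coincides with the action $\Phi$ of section \ref{sec gk}, so that the equivariant fiber map induces the bundle morphism. You have merely made explicit the fiberwise injectivity and bracket-preservation bookkeeping that the paper leaves implicit.
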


    For a bundle   $B$ over $M$, we denote the space of sections as $\Gamma(M, B)$, or simply $\Gamma(B)$.
 We denote   by
   $\check q$ the bilinear form on $\Gamma(E^{\mathfrak{su}}_1)$
   induced from the quadratic form $\check q$ on $\mathbb{K}^{1,1}$. And we  denote  by $\iota$ the
   induced inclusion   $\Gamma(E^{\mathfrak{su}})\rightarrow\Gamma(E)$ from $\iota: \mathfrak{su}_\mathbb{K}(1,1)_{sup}\rightarrow \mathcal{L}$.

 \section{Lie superalgebra bundle action on forms}
   In this section, we construct differential operators of
    order zero, one and two on
 differential forms on a $\mathbb{K}$-manifold $M$, and compute (some of) their supercommutators. Using these, we proceed
 to obtain
the main result  of this paper, namely  there is a natural Lie
superalgebra homomorphism $\sigma\circ\Psi:
\Gamma(E^{\mathfrak{su}})\longrightarrow
\mathrm{Symb}(\bigwedge^\bullet V^*, \bigwedge^\bullet V^*)$, when
$\mathbb{K}$ is associative (i.e. $\mathbb{K}=\mathbb{R},
\mathbb{C}$ or $\mathbb{H}$).

\subsection{\label{sec spin}Spin action on $\bigwedge^\bullet V^*$}
 Let $V$ be a real vector space. The vector space $W=V\oplus V^{\ast }$ has a natural quadratic form $Q$ and a natural spin structure \cite{mar}.
The spinor
representation $S$ of $Spin\left( W,Q\right) $ can be naturally identified with $%
\bigwedge^\bullet V^*$ using the following linear action of $W$ on
$\bigwedge^\bullet V^*$:
  $$(X+\xi)\cdot \varphi \triangleq \xi\wedge \varphi - i_X(\varphi),\;
     \mbox{where}\,\, X\in V, \xi\in V^*\;\mbox{and}\; \varphi\in  {\textstyle\bigwedge}^\bullet V^*.$$
 Recall that $Spin\left( W,Q\right) $ is a double cover of $SO\left(
W,Q\right) $ and the induced isomorphism on the Lie algebra level is
given by (cf. \cite{LM}):%
\begin{eqnarray*}
\mathrm{ad} &:&\mathfrak{spin}\left( W,Q\right) \overset{\cong
}{\longrightarrow }\mathfrak{so}\left(
W,Q\right);  \\
&&x \mapsto
     \mathrm{ad}(x),\mbox{ where }\, \mathrm{ad}(x):  W\rightarrow W\,;
 \mathrm{ad}(x)(w)= x w-w x \text{.}
\end{eqnarray*}%
 Thus given a metric on $V$, we can identify  $\mathfrak{so}\left( V\right)$ with a Lie subalgebra of $\mathfrak{spin}(W, Q)$ via
  $\mathrm{ad}^{-1}\circ \psi_4$, where $\psi_4$ is the diagonal embedding of
$\mathfrak{so}(V)$ into $\mathfrak{so}(W, Q)$. Using this
identification, one can show that this spin action of
$\mathfrak{spin}\left(
W,Q\right) $ on $S=\bigwedge^\bullet V^*$ restricts to the usual action of $%
\mathfrak{so}\left( V\right) $ on $\bigwedge^\bullet V^*$. Globally
over a manifold,  $\bigwedge^\bullet T^*$ can be identified as a
             spinor bundle  of $T\oplus T^*$ \cite{mar}.

\subsection{\label{op0 sec}Zeroth order operators}

    Let $(M, g)$ be a  $\mathbb{K}$-manifold. From now on, we always assume that $M$ is
    orientable (in the usual sense).   Then Hol$(g)\subset
    G_\mathbb{K}^\circ$, where $G_\mathbb{K}^\circ$ is the connected component of $G_\mathbb{K}$.
    We note that $G_\mathbb{K}^\circ=G_\mathbb{K}$ if $\mathbb{K}\neq \mathbb{R}$.

     Let $\mathcal{S}=\bigwedge^\bullet T^*$. Denote by
    $\mathrm{Diff}_k(\mathcal{S} , \mathcal{S})$ the space of differential operators of order $k$
      on $\Gamma(\mathcal{S})=\Omega^\bullet(M)$, and put $\mathrm{Diff}(\mathcal{S} , \mathcal{S})=\bigoplus_{k=0}^\infty
      \mathrm{Diff}_k(\mathcal{S} , \mathcal{S})$. In particular,
        $\mathrm{Diff}_0(\mathcal{S} , \mathcal{S})=\Gamma(\mathrm{End}(\mathcal{S}))$.
        With the natural isomorphism $\mathrm{ad}$ and the spinor representation  as mentioned in section \ref{sec spin},
        together with the natural inclusion $\iota:\Gamma(E^{\mathfrak{su}}_0)\hookrightarrow
   \Gamma(E_0)$, we obtain the following natural maps.
    \begin{defn}
       Define  $\Psi: \Gamma(E_0)\rightarrow \mathrm{Diff}_0(\mathcal{S},
         \mathcal{S})$ by $\Psi(x)=\rho_x$, where $\rho_x$ is
         defined as follows:  $(\rho_x\varphi)(p)=\mathrm{ad}^{-1}(x(p))\cdot
         \varphi(p)$
         for any $\varphi\in \Gamma(\mathcal{S})$ and any
         $p\in M$.
    \end{defn}

     \begin{defn}
         Define $\Psi_\iota: \Gamma(E^{\mathfrak{su}}_0)\rightarrow \mathrm{Diff}_0(\mathcal{S},
         \mathcal{S})$ by $
         \Psi_\iota(x)=\rho_{\iota(x)}$. We simply denote $\Psi_\iota$ (resp. $\rho_{\iota(x)}$) by
        $\Psi$ (resp. $\rho_x$).
    \end{defn}
     In \cite{li},
    the second author has studied the cases Hol$(g)\subset SO(n), U(n)$ and $Sp(n)$.
     We will restate the results in the appendix.

   Let $V=\mathbb{R}^m$ and $S=\bigwedge^\bullet
    V^*$. Since Hol$(g)\subset  G_\mathbb{K}^\circ$, the frame bundle of $M$ can be reduced to a
     $ G_\mathbb{K}^\circ$-bundle $P$ such that
    $\mathcal{S}=P\times_\Phi   S$.
    Note that there is a canonical bijection between
    $\Gamma(\mathcal{S})$ and the $ G_\mathbb{K}^\circ$-invariant sections $\Gamma(P,
    S)^{ G_\mathbb{K}^\circ}$ \cite{joy}. In order to obtain an operator on $\Gamma(\mathcal{S})$,
    it is enough to construct an operator on $\Gamma(P, S)^{ G_\mathbb{K}^\circ}$.

\begin{example}\label{erho}
   Let $\{f_j\}_{j=1}^m$ be the standard basis of $V$, and $\{f^j\}$ be the dual basis.
   Then  $\mathrm{ad}^{-1}(\psi_4(\mathfrak{so}(m)))=\mbox{Span}\{e_{i+m}e_{j+m}-e_ie_j~|~ 1\leq i< j\leq m\}\subset \mathfrak{spin}(W, Q)$,
  where $e_j=f^j+f_j, e_{j+m}=f^j-f_j, j=1, \cdots, m$.

   Note that    $\nu=e_1\cdots e_m\in Cl(W,
  Q)$ and that
$(e_{i+m}e_{j+m}-e_ie_j)\nu=\nu(e_{i+m}e_{j+m}-e_ie_j)$ for any
 $1\leq i< j\leq m$. As mentioned in section \ref{sec spin}, the spin action of  $\mathrm{ad}^{-1}(\psi_4(\mathfrak{so}(m)))$
 equals the usual action of $\mathfrak{so}(m)$ on $S$. Hence,
  the natural action  $\mathbb{R}\nu\rightarrow \mathrm{End}(S)$ commutes
  with the standard action  of the connected compact group $ G_\mathbb{K}^\circ$ on $S$. Hence,   $\nu$ provides an operator
   on $\Gamma(P, S)^{ G_\mathbb{K}^\circ}$, and therefore it induces a global    operator
      $\rho_\nu$ of order zero. In fact, $\rho_\nu|_{\Omega^r(M)}=(-1)^{mr+{r(r-1)\over
          2}}\star|_{\Omega^r(M)}.$
  \end{example}

\subsection{\label{op1 sec}First order operators}
  Recall that
$\Gamma(E_1)=\Gamma\big(\mathrm{Hom}(T^*, T\oplus
 T^*)\big)$, and  that the
Levi-Civita connection
 $\nabla$ of $M$ is a $
 G_\mathbb{K}^\circ$-connection.
 With the help of $\nabla$, we obtain the following natural map.
  \begin{defn}
       Define  $\Psi: \Gamma(E_1)\rightarrow \mathrm{Diff}_1(\mathcal{S},
         \mathcal{S})$ by $\Psi(u)=D_u$, where $D_u$ is the first order
         operator given by  composition of the following maps
 $$D_u:\,\,\,\Gamma(\mathcal{S})\overset{\nabla}{\longrightarrow}\Gamma(T^*\otimes
\mathcal{S})\overset{ u}{\longrightarrow}
                 \Gamma((T\oplus T^*)\otimes \mathcal{S})
      \overset{\mbox{ Clifford
     product}}{-\!\!\!-\!\!\!-\!\!\!-\!\!\!-\!\!\!-\!\!\!-\!\!\!-\!\!\!-\!\!\!-\!\!\!-\!\!\!-\!\!\!\longrightarrow}\Gamma(\mathcal{S}).$$
   \end{defn}

   By the natural identification of $Cl=Cl(T\oplus T^*, Q)$ with $\bigwedge^\bullet (T\oplus
    T^*)$, $D_u$ can also act on  $\Gamma(Cl)$
   through a similar procedure:
  $$D_u:\,\,\,\Gamma(Cl)\overset{\nabla}{\longrightarrow}\Gamma(T^*\otimes
Cl)\overset{ u}{\longrightarrow}
                 \Gamma((T\oplus T^*)\otimes Cl)
                  \overset{\mbox{ Clifford
     product}}{-\!\!\!-\!\!\!-\!\!\!-\!\!\!-\!\!\!-\!\!\!-\!\!\!-\!\!\!-\!\!\!-\!\!\!-\!\!\!-\!\!\!\longrightarrow}\Gamma(Cl).$$
 In particular, for any $x\in \Gamma(E_0)=\Gamma(\bigwedge^2 (T\oplus
 T^*))$, $D_u x$ is meaningful, where we regard $x$ as a section in
 $\Gamma(Cl)$ via ad${}^{-1}$. Note that
 $\nabla_X(s\cdot\varphi)=(\nabla_Xs)\cdot
 \varphi+s\cdot\nabla_X\varphi$, for any $X\in\Gamma(TM)$, any
 $s\in\Gamma(Cl)$ and any $\varphi\in\Gamma(\mathcal{S})$. We have

\begin{prop}\label{psu}
     For any  $x\in\Gamma(E_0)$ and any $u\in
\Gamma(E_1)$,
 $$ \rho_x\circ D_u-D_u\circ\rho_x=D_{x\cdot u} -D_ux.$$
  \end{prop}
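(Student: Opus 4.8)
The plan is to observe that \eqref{} — rather, that the asserted identity is \emph{local} (both sides are differential operators acting on $\Gamma(\mathcal S)$), so it suffices to check it after restricting to a coordinate neighbourhood, and then to unwind all three operators into explicit Clifford-algebraic formulas in a local orthonormal coframe. The only ingredients needed are: the local expression for $D_u$, the translation of $\rho_x$ into the spinor (i.e.\ Clifford module) action, and the Leibniz rule $\nabla_X(s\cdot\varphi)=(\nabla_X s)\cdot\varphi+s\cdot\nabla_X\varphi$ recorded just before the statement. No associativity of $\mathbb{K}$ is used here.

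First I would fix a local orthonormal frame $\{f_j\}$ of $TM$ with dual coframe $\{f^j\}$. Since $\nabla\varphi=\sum_j f^j\otimes\nabla_{f_j}\varphi$, the definition of $D_u$ unwinds to $D_u\varphi=\sum_j u(f^j)\cdot\nabla_{f_j}\varphi$, where $u(f^j)\in\Gamma(T\oplus T^*)\subset\Gamma(Cl)$ acts on the spinor $\nabla_{f_j}\varphi$ by Clifford multiplication. Writing $\hat x=\mathrm{ad}^{-1}(x)\in\Gamma(\mathfrak{spin}(W,Q))\subset\Gamma(Cl)$, the definition of $\rho_x$ reads $\rho_x\varphi=\hat x\cdot\varphi$, the same Clifford module action. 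The identical formula defines $D_u$ on $\Gamma(Cl)$, so $D_u\hat x=\sum_j u(f^j)\cdot\nabla_{f_j}\hat x$; this is what ``$D_u x$'' denotes, acting on forms as Clifford multiplication by the section $D_u\hat x\in\Gamma(Cl)$.

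Next I would expand $\rho_x\circ D_u$ using associativity of the Clifford module action: $\rho_x(D_u\varphi)=\sum_j(\hat x\,u(f^j))\cdot\nabla_{f_j}\varphi$, where now $\hat x\,u(f^j)$ is a product in $Cl$. The key algebraic input is the relation $\hat x\,w-w\,\hat x=\mathrm{ad}(\hat x)(w)=x(w)$ for $w\in W$ (the defining property of $\mathrm{ad}:\mathfrak{spin}(W,Q)\xrightarrow{\cong}\mathfrak{so}(W,Q)$ recalled in Section~\ref{sec spin}), together with the observation that, under $\mathrm{Hom}(V^*,W)\cong V\otimes W$, the $\mathfrak{so}(W,Q)$-action entering $x\cdot u$ is just post-composition, i.e.\ $(x\cdot u)(f^j)=x(u(f^j))$. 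Hence $\hat x\,u(f^j)=(x\cdot u)(f^j)+u(f^j)\,\hat x$, giving
$$\rho_x(D_u\varphi)=\sum_j(x\cdot u)(f^j)\cdot\nabla_{f_j}\varphi+\sum_j u(f^j)\cdot\hat x\cdot\nabla_{f_j}\varphi=D_{x\cdot u}\varphi+\sum_j u(f^j)\cdot\hat x\cdot\nabla_{f_j}\varphi.$$
On the other hand, the Leibniz rule $\nabla_{f_j}(\hat x\cdot\varphi)=(\nabla_{f_j}\hat x)\cdot\varphi+\hat x\cdot\nabla_{f_j}\varphi$ yields
$$D_u(\rho_x\varphi)=\sum_j u(f^j)\cdot(\nabla_{f_j}\hat x)\cdot\varphi+\sum_j u(f^j)\cdot\hat x\cdot\nabla_{f_j}\varphi.$$
Subtracting, the common terms $\sum_j u(f^j)\cdot\hat x\cdot\nabla_{f_j}\varphi$ cancel, leaving $\rho_x(D_u\varphi)-D_u(\rho_x\varphi)=D_{x\cdot u}\varphi-(D_u\hat x)\cdot\varphi=D_{x\cdot u}\varphi-(D_u x)\varphi$, which is the assertion.

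I would close by noting the two auxiliary points that deserve a line of justification: that $\mathrm{ad}^{-1}$ is a \emph{parallel} bundle isomorphism, so that $\nabla$ commutes with it and the identification of $x\in\Gamma(E_0)$ with $\hat x\in\Gamma(Cl)$ is harmless; and that the $\mathfrak{so}(W,Q)$-action on $E_1=\mathrm{Hom}(T^*,T\oplus T^*)$ used in defining the bracket of $E$ is indeed post-composition by the defining action on $W$. The main (and modest) obstacle is simply the bookkeeping around the two roles played by $u(f^j)$ — once as an element of $W$ acting on a spinor, once as a Clifford-algebra factor — and checking that this post-composition action matches the commutator $\hat x\,w-w\,\hat x$ in $Cl$; once these are aligned, the computation is forced.
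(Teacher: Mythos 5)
Your proof is correct and follows essentially the same route as the paper's: localize, expand $D_u$ in a frame, apply the Leibniz rule for the Clifford module action, and use the defining relation $\mathrm{ad}^{-1}(x)\,w - w\,\mathrm{ad}^{-1}(x)=x(w)$ together with $(x\cdot u)(\xi)=x(u(\xi))$ to recognize $D_{x\cdot u}$ and $D_u x$. The only cosmetic difference is that you use an orthonormal (co)frame while the paper uses a coordinate frame $\{dy^j\}$, which changes nothing in the argument.
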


\begin{proof}
  It is sufficient to prove it locally. Let $U$ be a coordinate
  chart with local coordinate $(y_1, \cdots, y_m)$. Denote $\nabla_{\!\!{\partial \over\partial y_j}}$ by $\nabla_{\!\!j}$.
   For any
  $\varphi\in\Gamma(U, \mathcal{S})$,   we have
\\$(\rho_x\circ D_u)\varphi =\mathrm{ad}^{-1}(x)\cdot\sum_{j=1}^m
u(dy^j)\cdot\nabla_{\!\!j}\varphi$, and we have
\\$(D_u\circ\rho_x)\varphi
=\sum\limits_{j=1}^m
u(dy^j)\cdot\nabla_{\!\!j}(\mathrm{ad}^{-1}(x)\cdot
\varphi)=\sum\limits_{j=1}^m u(dy^j)\cdot\big((\nabla_{\!\!j}
\mathrm{ad}^{-1}(x))\cdot \varphi
         +\mathrm{ad}^{-1}(x)\cdot\nabla_{\!\!j}\varphi\big).$

     \begin{align*}{}\mathrm{Hence},&\,\,\,\rho_x\circ D_u\varphi-D_u\circ\rho_x\varphi\\
              =&\big(\sum_{j=1}^m (\mathrm{ad}^{-1}(x)\cdot u(dy^j)-u(dy^j)\cdot\mathrm{ad}^{-1}(x) )\cdot\nabla_{\!\!j}\varphi
              \big)    -\sum_{j=1}^m  u(dy^j)\cdot\nabla_{\!\!j} \mathrm{ad}^{-1}(x) \cdot\varphi\\
              =&\big(\sum_{j=1}^m(\mathrm{ad}(\mathrm{ad}^{-1}(x))\cdot u(dy^j) )\cdot\nabla_{\!\!j}\varphi
                  \big)    -(D_ux)\varphi\\
              =&\big(\sum_{j=1}^m(x\cdot u)(dy^j) \nabla_{\!\!j}
              \varphi \big)    -(D_ux)\varphi\\
              =&D_{x\cdot u} \varphi -(D_ux)\varphi.
     \end{align*}
  Hence, $\rho_x\circ D_u-D_u\circ\rho_x=D_{x\cdot u}  -D_ux$.
\end{proof}



                Because of the inclusion $\iota: \Gamma(E^{\mathfrak{su}}_1)\rightarrow
                  \Gamma(E_1)$, we have the following natural map.

 \begin{defn}
        Define  $\Psi_\iota: \Gamma(E^{\mathfrak{su}}_1)\rightarrow \mathrm{Diff}_1(\mathcal{S},
         \mathcal{S})$ by  $
         \Psi_\iota(u)=D_{\iota(u)}$. We simply denote $\Psi_\iota$ (resp. $D_{\iota(u)}$) by
         $\Psi$ (resp. $D_u$).
  \end{defn}

     Note that the action of $G_\mathbb{K}^\circ$ on $\mathbb{R}^{1,1}=\mathbb{R}\epsilon_1\oplus\mathbb{R}\epsilon_2$ is always trivial,
     where  $\epsilon_1=(1, 0), \epsilon_2=(0, 1)\in\mathbb{K}^{1,1}$. Hence,
     $E^{\mathfrak{su}}_1$ has a trivial subbundle $M\times\mathbb{R}^{1,1}$.
     Therefore  the constant section $\epsilon_j$ induces a first order operator $D_{\epsilon_j}, j=1, 2$.
     Moreover, it follows from the observation
     $\nu\cdot(\iota(\epsilon_2)(f^k))\cdot
     \nu^{-1}=(-1)^{m-1}\iota(\epsilon_1)(f^k)$ and the construction of $\rho_\nu$ as in
     Example \ref{erho}   that $  D_{\epsilon_1}=(-1)^{m-1}\rho_{\nu}D_{\epsilon_2}\rho_\nu^{-1}. $

      Because of the use of the Levi-Civita connection, we have
          $D_{\epsilon_2}=\sum_j
                  dy^j\wedge\nabla_{\!\!{\partial\over \partial y_j}}=d$ and  $D_{\epsilon_1}=\sum_j
                 -i_{{\partial\over \partial y_j}}\circ\nabla_{\!\!{\partial\over \partial y_j}}=d^*$  (cf.
                  \cite{LM}). In particular,
                  $D_{\epsilon_1}{}^2=D_{\epsilon_2}{}^2=0$.
                  However, we would rather make the assumption
                  ``$D_{\epsilon_2}{}^2=0$" in Proposition
                  \ref{pde}, for possible application to other
                  cases.

  \subsection{Second order operators}
   For any linear operators $a, b, c$ on $\Gamma(\mathcal{S})$, we   define $\{a,
       b\}\triangleq ab+ba$ and
        $[a, b]\triangleq ab-ba$.
 Clearly,   $ [a, \{b, c\}]=\{[a, b], c\}+\{b, [a, c]\}$.

       Define $\triangle=\{D_{\epsilon_1}, D_{\epsilon_2}\}.$
       Then we have

  \begin{prop}\label{pdd}
       For any $u, v\in \Gamma(E^{\mathfrak{su}}_1)$,  $\{D_u, D_v\}-2\check q(u,v)\triangle$
        is a first order differential operator.
 \end{prop}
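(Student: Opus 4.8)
The plan is to compute $\{D_u, D_v\}$ locally and extract its principal (second‑order) part, showing that this part is exactly $2\check q(u,v)\triangle$, so the difference drops to order one. Work in a coordinate chart $U$ with coordinates $(y_1,\dots,y_m)$, abbreviating $\nabla_j=\nabla_{\partial/\partial y_j}$ and writing $u$, $v$ as bundle maps $T^*\to T\oplus T^*$ so that, for $\varphi\in\Gamma(U,\mathcal S)$,
\[
D_u\varphi=\sum_j u(dy^j)\cdot\nabla_j\varphi,
\qquad
D_v\varphi=\sum_k v(dy^k)\cdot\nabla_k\varphi,
\]
where $\cdot$ is Clifford multiplication. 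Composing and symmetrizing, the only terms that can contribute two derivatives come from the piece where $\nabla_k$ then $\nabla_j$ both hit $\varphi$ without differentiating the coefficients $u(dy^j)$, $v(dy^k)$; all other terms — those in which a $\nabla$ falls on a coefficient, on $\nabla u$, or involve the Clifford derivative of the metric connection — are manifestly of order at most one. So modulo first‑order operators,
\[
\{D_u,D_v\}\varphi \equiv \sum_{j,k}\bigl(u(dy^j)\cdot v(dy^k)+v(dy^j)\cdot u(dy^k)\bigr)\cdot\nabla_j\nabla_k\varphi .
\]

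Next I would symmetrize the derivative indices. Since $\nabla_j\nabla_k\varphi-\nabla_k\nabla_j\varphi$ is curvature acting on $\varphi$, a zeroth‑order operator, we may replace $\nabla_j\nabla_k$ by its symmetric part modulo lower order. Then the symbol is governed by the symmetrization over both pairs of indices of $u(dy^j)\cdot v(dy^k)$, i.e.\ by the Clifford anticommutator $\{u(dy^j),v(dy^k)\}$ contracted with the symmetric tensor $\partial_j\partial_k$. The Clifford relation in $Cl(W,Q)$ gives $\{w_1,w_2\}=-2Q(w_1,w_2)$ for $w_1,w_2\in W=T\oplus T^*$, so
\[
\tfrac12\sum_{j,k}\{u(dy^j),v(dy^k)\}\,\partial_j\partial_k
= -\sum_{j,k}Q\bigl(u(dy^j),v(dy^k)\bigr)\,\partial_j\partial_k .
\]
Now the point is that $\sum_{j,k}Q(u(dy^j),v(dy^k))\,\partial_j\partial_k$ is built from the $\iota$‑images of $u,v\in\mathbb K^{1,1}$ inside $\mathrm{Hom}(V^*,W)$, and by the very definition of $\iota$ on $\mathbb K^{1,1}$ (via $\psi_u(\xi_{\mathbb K})=(\xi_{\mathbb K}u_1,\xi_{\mathbb K}u_2)$) together with the compatibility $g(x\alpha,y\alpha)=g(x,y)\|\alpha\|^2$ of the inner product with the $\mathbb K$‑multiplication, one gets
\[
\sum_{j}Q\bigl(\iota(u)(dy^j)\,,\,\iota(v)(dy^k)\bigr)\,\partial_j
= \check q(u,v)\,\cdot(\text{identity on }V^*\hookrightarrow W\text{ read back}),
\]
so that $-\sum_{j,k}Q(\iota(u)(dy^j),\iota(v)(dy^k))\partial_j\partial_k$ equals $2\check q(u,v)$ times $\sum_j \iota(\epsilon_1)(dy^j)\cdot\iota(\epsilon_2)(dy^k)$ summed symmetrically — which is precisely the principal symbol of $\triangle=\{D_{\epsilon_1},D_{\epsilon_2}\}$, since $D_{\epsilon_1}=d^*$ and $D_{\epsilon_2}=d$ and $\triangle$ is the Laplacian $\Delta$ with second‑order part $-\sum_j\partial_j\partial_j$ in these coordinates (up to the metric, which one handles by choosing normal coordinates at a point). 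Therefore the second‑order parts of $\{D_u,D_v\}$ and $2\check q(u,v)\triangle$ agree, and the difference is first order.

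The main obstacle I expect is the bookkeeping in the middle step: matching $\sum_{j,k}Q(\iota(u)(dy^j),\iota(v)(dy^k))\partial_j\partial_k$ with $\check q(u,v)$ times the symbol of $\triangle$. This requires unwinding the chain of identifications $\psi_1,\psi_u$ defining $\iota$ on $\mathbb K^{1,1}$, using that $Q$ on $V\oplus V^*$ restricted to the relevant $V$‑ and $V^*$‑components reproduces $g$ (resp.\ $g^{-1}$), and crucially using the norm‑multiplicativity $g(x\alpha,y\alpha)=\|\alpha\|^2 g(x,y)$ to see the $\mathrm{Re}(u_1\bar v_2+u_2\bar v_1)$ pattern of $\check q$ emerge after contracting over the orthonormal frame. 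Once a point $p\in M$ is fixed and normal coordinates are chosen so that $\nabla_j$ at $p$ are ordinary partials and the metric is Euclidean to first order, this becomes a purely linear‑algebraic identity in the fiber, and the proof concludes by noting that "second order part vanishes" is a pointwise statement.
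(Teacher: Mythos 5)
Your argument is correct and follows essentially the same route as the paper: both compute the second-order part (the $2$-symbol) of $\{D_u,D_v\}$ in normal coordinates, use the Clifford relation to turn the symmetrized product $\iota(u)(dy^j)\cdot\iota(v)(dy^k)+\iota(v)(dy^k)\cdot\iota(u)(dy^j)$ into $-2Q\bigl(\iota(u)(dy^j),\iota(v)(dy^k)\bigr)$, and then invoke the fiberwise identity $Q\bigl(\iota(u)(dy^j),\iota(v)(dy^k)\bigr)=\check q(u,v)\,\delta_{jk}$ to match $-\check q(u,v)\sum_j\bigl(\partial g/\partial y_j\bigr)^2$ with the symbol of $2\check q(u,v)\triangle$. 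The paper phrases this via the formal symbol map $\sigma_2$ of Wells rather than working ``modulo first-order operators,'' but the content, including the key linear-algebraic identity you flag as the main bookkeeping step, is the same.
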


   We will give a proof by computing the symbols in the appendix.
   At the moment, we would like to give an extension of
  $\mathfrak{su}_\mathbb{K}(1,1)$. We define   $\mathfrak{u}_\mathbb{K}(1,1)$ to be
    $\mathfrak{su}_\mathbb{K}(1,1)$ itself if $\mathbb{K}\neq
    \mathbb{C}$, and let  $\mathfrak{u}_\mathbb{C}(1,1)\triangleq  \mathfrak{su}_\mathbb{C}(1,1)\oplus \mathbb{R}\phi_A$,
    where $\phi_A\in\mathfrak{so}(\mathbb{K}^2, \check q)$ is as defined in section \ref{sec su11} with
    $A=\sqrt{-1}\cdot \mathrm{I}_2$, the product of $\sqrt{-1}$ and the identity matrix $\mathrm{I}_2\in\mathrm{Mat}(2, \mathbb{C})$.
   Then we have $ \mathfrak{u}_\mathbb{K}(1,1)=\Big\{\Bigg(\begin{array}{cc} \beta_1 & \beta_2\\\beta_3 &-\bar\beta_1
                        \end{array}\Bigg)~\Big|~ \beta_1\in\mathbb{K}, \beta_2,\beta_3\in
                                \mbox{Im}\mathbb{K} \Big\}$, if $\mathbb{K}$ is     associative.
   Furthermore, all the statements  after section \ref{sec su11} that
   involve  $\mathfrak{su}_\mathbb{K}(1,1)$  still hold true if we
   replace $\mathfrak{su}_\mathbb{K}(1,1)$ with
   $\mathfrak{u}_\mathbb{K}(1,1)$.  With this observation, we can  provide another  proof  for the  most relevant case as
  below.

   \begin{prop}\label{pde}
      Suppose $E^{\mathfrak{su}}$ is trivial and $D_{\epsilon_2}{}^2=0$.
      Then for any constant sections $u, v\in \Gamma(E^{\mathfrak{su}}_1)=\Gamma(M\times \mathbb{K}^{1,1})$,
          $$\{D_u, D_v\}=2\check q(u,v) \triangle.$$
 \end{prop}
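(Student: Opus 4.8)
The plan is to use the fact that $E^{\mathfrak{su}}$ is trivial, so that every constant section $u \in \Gamma(M \times \mathbb{K}^{1,1})$ corresponds to a fixed vector in $\mathbb{K}^{1,1}$, and the operator $D_u$ is built from the Levi-Civita connection via the Clifford product after applying $\iota(u) \in \mathrm{Hom}(V^*, V \oplus V^*)$. Since the relation to prove is $\mathbb{R}$-bilinear in $u$ and $v$, and symmetric (both sides are symmetric in $u,v$), it suffices to verify it on a spanning set of $\mathbb{K}^{1,1}$, e.g. on pairs drawn from $\epsilon_1, \epsilon_2$ and $\epsilon_1 \alpha, \epsilon_2 \alpha$ for $\alpha$ running over a basis of $\mathrm{Im}\,\mathbb{K}$ (together with $\epsilon_1, \epsilon_2$ themselves). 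The key observations already recorded in the text are that $D_{\epsilon_2} = d$, $D_{\epsilon_1} = d^*$, hence $\triangle = \{d^*, d\} = \Delta$ is the Hodge Laplacian, and $\check q(\epsilon_i, \epsilon_j) = \tfrac12$ if $\{i,j\} = \{1,2\}$ and $0$ otherwise.

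First I would dispose of the ``diagonal'' cases: for $u = v = \epsilon_2$ we need $\{D_{\epsilon_2}, D_{\epsilon_2}\} = 2 D_{\epsilon_2}^2 = 0$, which is exactly the hypothesis $D_{\epsilon_2}^2 = 0$; since $\check q(\epsilon_2,\epsilon_2) = 0$ this matches the right-hand side. For $u = v = \epsilon_1$ we use $D_{\epsilon_1} = (-1)^{m-1} \rho_\nu D_{\epsilon_2} \rho_\nu^{-1}$ from the text, so $D_{\epsilon_1}^2 = \rho_\nu D_{\epsilon_2}^2 \rho_\nu^{-1} = 0$, again matching $\check q(\epsilon_1,\epsilon_1) = 0$. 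For $u = \epsilon_1, v = \epsilon_2$ the claim is $\{D_{\epsilon_1}, D_{\epsilon_2}\} = 2 \cdot \tfrac12 \cdot \triangle = \triangle$, which holds by the very definition $\triangle = \{D_{\epsilon_1}, D_{\epsilon_2}\}$. Next, for a general element $u = \epsilon_i \alpha$ with $\alpha \in \mathbb{K}$ a unit, I would observe that the $G_\mathbb{K}$-action, or more simply the algebra of the Clifford product, intertwines $D_{\epsilon_i \alpha}$ with $D_{\epsilon_i}$ in a controlled way; concretely $\iota(\epsilon_i \alpha)(f^k) = (\cdot)\alpha$ acts as right $\mathbb{K}$-multiplication on the $\mathbb{K}^n = V$ or $V^*$ factor, which is a $\mathbb{K}$-linear (hence holonomy-equivariant, when $\mathbb{K} \ne \mathbb{R}$) automorphism, so $D_{\epsilon_i\alpha}$ is conjugate to $D_{\epsilon_i}$ by a zeroth-order operator $R_\alpha$ arising from right multiplication. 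Squaring then gives $D_{\epsilon_i\alpha}^2 = R_\alpha D_{\epsilon_i}^2 R_\alpha^{-1} = 0$.

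The genuinely computational cross-terms are those of the form $\{D_{\epsilon_1 \alpha}, D_{\epsilon_2 \beta}\}$ for $\alpha, \beta \in \{1\} \cup \mathrm{Im}\,\mathbb{K}$; here I expect the \emph{main obstacle}. The strategy is to reduce to the already-proven relation $\{D_{\epsilon_1}, D_{\epsilon_2}\} = \triangle$ by computing, at a point and in a local frame, the anticommutator of the Clifford actions of $\iota(\epsilon_1\alpha)(f^k)$ and $\iota(\epsilon_2\beta)(f^\ell)$. Because $\iota(\epsilon_1\alpha)(f^k) \in V^*$-part scaled by $\alpha$ and $\iota(\epsilon_2\beta)(f^\ell) \in V$-part scaled by $\beta$, their Clifford anticommutator is governed by the quadratic form $Q$ paired against the $\mathbb{K}$-bilinear form $g(f^k \alpha, f^\ell \beta)$; summing over $k = \ell$ (which is what the connection forces, since $\nabla$ is metric and torsion-free) produces $\sum_k g(f^k\alpha, f^k\beta) = \mathrm{Re}(\bar\alpha\beta)\cdot(\text{something})$, and after taking real parts this is precisely $2\check q(\epsilon_1\alpha, \epsilon_2\beta) = \mathrm{Re}(\overline{\alpha}\beta)$ times the identity, the remaining factor reassembling into $\triangle$. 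The one subtlety to watch is the non-commutativity of $\mathbb{H}$: one must be careful whether right multiplications by $\alpha$ and $\beta$ commute past the Clifford generators, but since in the relevant expression $\alpha$ hits the $\epsilon_1$-leg and $\beta$ the $\epsilon_2$-leg, they act on different tensor slots and no associativity/commutativity issue beyond $\mathrm{Re}(\bar\alpha\beta) = \mathrm{Re}(\bar\beta\alpha)$ is needed, which restores the symmetry of both sides. Assembling these cases by bilinearity completes the proof; alternatively, invoking the remark that all statements survive replacing $\mathfrak{su}_\mathbb{K}(1,1)$ by $\mathfrak{u}_\mathbb{K}(1,1)$, one may absorb the extra generator $\phi_A$ ($\mathbb{K} = \mathbb{C}$) into the same bookkeeping without changing the argument.
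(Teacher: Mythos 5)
Your reduction to spanning elements and your treatment of the diagonal cases and of $(\epsilon_1,\epsilon_2)$ are fine and essentially the paper's (which also uses $D_{\epsilon_1}=(-1)^{m-1}\rho_\nu D_{\epsilon_2}\rho_\nu^{-1}$ and the definition $\triangle=\{D_{\epsilon_1},D_{\epsilon_2}\}$). The genuine gap is in the cross terms $\{D_{\epsilon_1\alpha},D_{\epsilon_2\beta}\}$, which you yourself flag as the main obstacle: the pointwise Clifford computation you sketch only controls the principal symbol. Writing $D_u=\sum_k c\bigl(\iota(u)(e^k)\bigr)\nabla_{e_k}$ and expanding the anticommutator, the Clifford relation $\{c(w),c(w')\}=-2Q(w,w')$ can be applied only after symmetrizing in the two derivative indices; the antisymmetric part pairs with $[\nabla_k,\nabla_\ell]$, i.e.\ with curvature, and contributes zeroth-order terms invisible to your argument (and your claim that ``the connection forces $k=\ell$'' is not correct: the double sum does not collapse, only the symmetrized Clifford part produces $\delta_{k\ell}$). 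What your computation actually proves is Proposition \ref{pdd}, namely that $\{D_u,D_v\}-2\check q(u,v)\triangle$ has order at most one -- and indeed the paper proves \ref{pdd} by exactly this symbol argument in the appendix. The content of Proposition \ref{pde} is the vanishing of that lower-order discrepancy; for $\mathbb{K}=\mathbb{C}$, the case $u=\sqrt{-1}\,\epsilon_1$, $v=\sqrt{-1}\,\epsilon_2$ is the K\"ahler identity $\Delta_{d^c}=\Delta$, which is not a pointwise Clifford identity but uses the holonomy hypothesis at the level of curvature.

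The paper closes this gap by an algebraic bootstrap that your plan does not contain: by Proposition \ref{psu}, for a constant section $x\in\Gamma(E^{\mathfrak{su}}_0)$ one has $[\rho_x,D_w]=D_{x\cdot w}$, so each $u\in\mathrm{Im}\mathbb{K}\,\epsilon_i$ is realized as $x\cdot\epsilon_j$ for an explicit constant matrix $x$ (in $\mathfrak{u}_\mathbb{K}(1,1)$, whence the paper's extension of $\mathfrak{su}_\mathbb{K}(1,1)$), and the cross anticommutators are then evaluated through the super-Jacobi identity $[\rho_x,\{a,b\}]=\{[\rho_x,a],b\}+\{a,[\rho_x,b]\}$, reducing everything to $D_{\epsilon_1}^2=D_{\epsilon_2}^2=0$ and the definition of $\triangle$; this is where the triviality of $E^{\mathfrak{su}}$ does real work. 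Your conjugation idea could salvage some cases (e.g.\ $\{D_{\epsilon_1\alpha},D_{\epsilon_2\alpha}\}=\hat R_\alpha\,\triangle\,\hat R_\alpha^{-1}$ for a parallel zeroth-order $\hat R_\alpha$), but you would still need $[\hat R_\alpha,\triangle]=0$, which is again not a symbol-level fact and is essentially what the paper's commutator argument supplies. Finally, pairs such as $(\epsilon_1\alpha,\epsilon_1\beta)$ with $\alpha\neq\beta$ are missing from your case list; they do follow from your diagonal statement by polarization, but this should be said explicitly.
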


 \begin{remark}   It follows from Example \ref{etrivial} that  $E^{\mathfrak{su}}$ is trivial
     only if $\mathbb{K}$ is associative.
 \end{remark}

\begin{proof}[Proof of Proposition \ref{pde}]
Because of the decomposition
$\mathbb{K}^{1,1}=\mathbb{R}\epsilon_1\oplus
\mathrm{Im}\mathbb{K}\epsilon_1\oplus
 \mathbb{R}\epsilon_2\oplus\mathrm{Im}\mathbb{K}\epsilon_2$, we can
 write any $u, v\in\mathbb{K}^{1,1}$ as $u=u_{1\mathrm{r}}+u_{{1\mathrm{i}}}+u_{2\mathrm{r}}+u_{2\mathrm{i}}$
 and  $v=v_{1\mathrm{r}}+v_{{1\mathrm{i}}}+v_{2\mathrm{r}}+v_{2\mathrm{i}}$.

\bigskip

 \noindent Case $u, v\in\mathbb{R}\epsilon_1\oplus\mathbb{R}\epsilon_2$:

      As mentioned in section \ref{op1 sec}, $D_{\epsilon_1}=(-1)^{m-1}\rho_{\nu}D_{\epsilon_2}\rho_{\nu}^{-1}$. Hence, it follows
     from $D_{\epsilon_2}{}^2=0$ that   $D_{\epsilon_1}{}^2=0$.
       Note that   $\check q(\epsilon_1, \epsilon_2)={1\over  2}$ and
          $ \check q(\epsilon_1, \epsilon_1)=\check q(\epsilon_2,
          \epsilon_2)=0$.
      For the constant sections     $u, v\in\mathbb{R}\epsilon_1\oplus\mathbb{R}\epsilon_2$, there exist $a_1, a_2, b_1, b_2\in\mathbb{R}$ such that
           \begin{align*}\{ D_{u  },
           D_{v }\}
             &=\{D_{a_1\epsilon_1+a_2\epsilon_2},D_{b_1\epsilon_1+b_2\epsilon_2}\}\\
             &=\{a_1D_{\epsilon_1}+a_2D_{\epsilon_2},b_1D_{\epsilon_1}+b_2D_{\epsilon_2}\}\\
             &=(a_2b_1+a_1b_2)\triangle\\
             &=2\check q(u, v)\triangle.
          \end{align*}

\bigskip

 \noindent Case  $ u \in\mathrm{Im}\mathbb{K}\epsilon_1,  v \in  \mathbb{R}\epsilon_1\oplus\mathrm{Im}\mathbb{K}\epsilon_1\oplus\mathbb{R}\epsilon_2 $:

    Note that for any constant sections $x\in
       \Gamma(E^{\mathfrak{su}}_0)$  and $w\in
       \Gamma(E^{\mathfrak{su}}_1)$, $D_wx=0$ and
       $\iota(x)\cdot\iota(\epsilon_{\ell})=\iota(x\cdot \epsilon_{\ell})$,
       $\ell=1, 2$.  Since $\check q(\epsilon_{\ell},  \epsilon_{\ell})=0$, we have
          \begin{align*} 0=[\rho_x, 0]&=[\rho_x, \{D_{\epsilon_{\ell}}, D_{\epsilon_{\ell}}\}]\\
                                  &=\{[\rho_x, D_{\epsilon_{\ell}}], D_{\epsilon_{\ell}}\}+\{D_{\epsilon_{\ell}},
                                            [\rho_x,  D_{\epsilon_{\ell}}]\}\\
                                  &= 2\{D_{x\cdot {\epsilon_{\ell}}}, D_{\epsilon_{\ell}}\}
                                          \qquad\quad(\mbox{by Proposition
                                          }\ref{psu}).
         \end{align*}
         Again note that $0=\check{q}(x\cdot \epsilon_{\ell}, \epsilon_{\ell})+\check{q}(\epsilon_{\ell}, x\cdot \epsilon_{\ell})
                        =2\check{q}(x\cdot
                       \epsilon_{\ell},\epsilon_{\ell})$. Hence,
                $$\{ D_{x\cdot
                \epsilon_{\ell}}, D_{\epsilon_{\ell}}\}=0=2\check q(x\cdot
                        \epsilon_{\ell}, \epsilon_{\ell})\triangle.$$
          Note that  $u=c\epsilon_1$ for some  $c\in\mathrm{Im}\mathbb{K}$,
           $x\triangleq \left(\begin{array}{cc}0&-c \\0&0\end{array}\right)$
           and $y\triangleq \left(\begin{array}{cc}-c &0\\0&-c \end{array}\right)$
           are  constant sections in $\Gamma(E^{\mathfrak{su}}_0)$ such that $x\cdot
           \epsilon_2=u $ and  $y\cdot
           \epsilon_1=u$.
           Take $b_1, b_2\in\mathbb{R}$ such that $v=b_{1}\epsilon_1+v_{1\mathrm{i}}+b_2\epsilon_2$ where
           $v_{1\mathrm{i}}\in \mathrm{Im}\mathbb{K}\epsilon_1$.
       Then we have

       $\{ D_{u }, D_{b_{1}\epsilon_1}\}=b_1\{ D_{y\cdot \epsilon_1},
       D_{\epsilon_1}\}=0$ and   $\{ D_{u }, D_{b_2 \epsilon_2}\}=b_2\{ D_{x\cdot \epsilon_2},
         D_{\epsilon_2}\}=0.$
  \\Note that $\{D_{\epsilon_2},  D_{v_{1\mathrm{i}}}\}=\{D_{v_{1\mathrm{i}}},
     D_{\epsilon_2}\}=0$  and that   $ \iota(x)\cdot \iota(v_{1\mathrm{i}})=0$, we have
               \begin{align*} \{D_{u }, D_{v_{1\mathrm{i}}}\}=\{D_{x\cdot\epsilon_2}, D_{v_{1\mathrm{i}}}\}&=\{[\rho_x, D_{\epsilon_2}], D_{v_{1\mathrm{i}}}\} \\
                                  &=[\rho_x, \{D_{\epsilon_2},  D_{v_{1\mathrm{i}}}\}]-\{D_{\epsilon_2},   [\rho_x,  D_{v_{1\mathrm{i}}}]\} \\
                                  &= [\rho_x, 0] -\{D_{\epsilon_2},   D_{ \iota(x)\cdot \iota({v_{1\mathrm{i}}})}\} \\
                                  &= 0-\{D_{\epsilon_2},   0\}=0
                \end{align*}
  Since $ u \in\mathrm{Im}\mathbb{K}\epsilon_1$ and $v \in  \mathbb{R}\epsilon_1\oplus\mathrm{Im}\mathbb{K}\epsilon_1\oplus\mathbb{R}\epsilon_2$,
  $\check q(u, v)=0$. Therefore we have,  $$\{D_{u },  D_v\}=\{D_{u },  D_{b_{1}\epsilon_1}\}+\{D_u, D_{v_{1\mathrm{i}}}\}+\{D_u, D_{b_{2}\epsilon_2}\}=0=
          2\check q(u, v)\triangle.$$
\bigskip

 \noindent Case  $ u \in\mathrm{Im}\mathbb{K}\epsilon_2,  v \in  \mathbb{R}\epsilon_1\oplus\mathbb{R}\epsilon_2 \oplus\mathrm{Im}\mathbb{K}\epsilon_2$:

      $u=c\epsilon_2$ for some $c\in\mathrm{Im}\mathbb{K}$. Define  $x\triangleq \left(\begin{array}{cc}0&0\\-c&0\end{array}\right)$
           and $y\triangleq \left(\begin{array}{cc}-c &0\\0&-c
           \end{array}\right)$, and use the same method as above, we
           can show the formula $\{D_u ,  D_v\}=0= 2\check q(u, v)\triangle.$

\bigskip

 \noindent Case  $ u \in\mathrm{Im}\mathbb{K}\epsilon_1,  v \in  \mathrm{Im}\mathbb{K}\epsilon_2 $:

  Take $c_1,
   c_2\in\mathrm{Im}\mathbb{K}$ such that $u=c_1\epsilon_1$ and $v=c_2\epsilon_2$. Then we have the constant sections
       $x\triangleq \left(\begin{array}{cc}0&-c_1\\0&0\end{array}\right)$
           and $y\triangleq \left(\begin{array}{cc}0 &0\\-c_2&0
           \end{array}\right)$  in
           $\Gamma(E^{\mathfrak{su}}_0)$ such that   $x\cdot
           \epsilon_2=u $ and  $y\cdot
           \epsilon_1=v $.
           Note that $[\rho_{x },  \rho_{y}]=\rho_{[x, y]}$ and $x\cdot \epsilon_1=0$, we have
             \begin{align*} \{D_{u }, D_{ v }\}=\{D_{x \cdot\epsilon_2}, D_{
             v}\}
                                  &=\{[\rho_{x }, D_{\epsilon_2}], D_v\} \\
                                  &=[\rho_{x}, \{D_{\epsilon_2},  D_v\}]-\{D_{\epsilon_2},   [\rho_{x },  D_{v}]\} \\
                                  &=-\{D_{\epsilon_2},   [\rho_{x},  [\rho_{y}, D_{\epsilon_1}]]\} \\
                                  &=-\{D_{\epsilon_2},   [[\rho_{x},  \rho_{y}], D_{\epsilon_1}]+[\rho_{y},[\rho_{x}, D_{\epsilon_1}]]\} \\
                                  &=-\{D_{\epsilon_2},   D_{[x, y]\cdot\epsilon_1}+0\}  \\
                                  &=-2\check q(\epsilon_2,   [{x},  y]\cdot\epsilon_1)\triangle \\
                                  &=2\check q(x\cdot\epsilon_2,    y\cdot\epsilon_1)\triangle\\
                                  &=2\check q(u,  v )\triangle
               \end{align*}
         Since the product $\{\cdot, \cdot\}$ is symmetric, the
         formula $\{D_{u }, D_{ v }\}=2\check q(u,  v )\triangle$ also holds true for the remaining cases.
           Hence, we have completed the proof.
\end{proof}

 \subsection{\label{ap sec}Main results}
 Both $\Gamma(E^{\mathfrak{su}})$ and $\Gamma(E)$ have
   induced Lie superalgebra  structures.  From Lemma \ref{lemaa},
   $\iota: \Gamma(E^{\mathfrak{su}})\rightarrow\Gamma(E)$ is no longer a Lie superalgebra morphism when $\mathbb{K}=\mathbb{O}$.
   However,
   $\iota(\Gamma(E^{\mathfrak{su}}_0))\oplus\big(\iota(\Gamma(E^{\mathfrak{su}}_0))\cdot \iota(\Gamma(E^{\mathfrak{su}}_1))\big)\oplus\iota(\Gamma(E^{\mathfrak{su}}_2))$
  is always a super Lie subalgebra of $\Gamma(E)$.

  For any differential operator $D$ of order $k$, its symbol
  $\sigma_k(D)$ is an element in $\mathrm{Symb}_k(\mathcal{S},
  \mathcal{S})=\Gamma\big(M, \mathrm{Sym}^kT^*\otimes \mathrm{Hom}(\mathcal{S},
  \mathcal{S})\big)$ \cite{wells}. This symbol map fits the following
  exact sequence
      $$0\longrightarrow \mathrm{Diff}_{k-1}(\mathcal{S}, \mathcal{S})
      \overset{j}{\longrightarrow}\mathrm{Diff}_{
    k}(\mathcal{S}, \mathcal{S})\overset{\sigma_k}{\longrightarrow}\mathrm{Symb}_{
    k}(\mathcal{S}, \mathcal{S}),$$
 where $j$ is the natural inclusion. Furthermore, $ \mathrm{Symb}(\mathcal{S},\mathcal{S})=\bigoplus_{k=0}^\infty
     \mathrm{Symb}_k(\mathcal{S}, \mathcal{S})$
         has a natural Lie   superalgebra  structure such that
         $$\sigma: \mathrm{Diff}(\mathcal{S}, \mathcal{S})\longrightarrow\mathrm{Symb}(\mathcal{S}, \mathcal{S}) $$
  is a  Lie superalgebra homomorphism.

    Recall that for any section $(x, u)$ in  $\Gamma(E_0)\oplus\Gamma(E_1)$ (resp.
      $\Gamma(E^{\mathfrak{su}}_0)\oplus\Gamma(E^{\mathfrak{su}}_1)$),  we have constructed the associated  differential operator (of order zero
    and one) $(\rho_x, D_u)$ (resp. $(\rho_{\iota(x)}, D_{\iota(u)})$).  Note that both $E^{\mathfrak{su}}_2$ and $E_2$ are trivial line
   bundles, any smooth section $f$ of $E_2$ (resp.
   $E^{\mathfrak{su}}_2$) is a smooth function on $M$.
 Then we
    obtain the  following natural maps.

    \begin{defn}
        Define  $\Psi: \Gamma(E)\rightarrow \bigoplus_{k=0}^2\mathrm{Diff}_k(\mathcal{S},
         \mathcal{S})\subset\mathrm{Diff}(\mathcal{S},
         \mathcal{S})$   by $\Psi(x, u, f)=(\rho_x, D_u, -{1\over\dim M}f\triangle)$
          for any $(x, u, f)\in
          \Gamma(E_0)\oplus\Gamma(E_1)\oplus\Gamma(E_2)=\Gamma(E)$.
    \end{defn}

     \begin{defn}
        Define $\Psi_\iota: \Gamma(E^{\mathfrak{su}})\rightarrow \bigoplus_{k=0}^2\mathrm{Diff}_k(\mathcal{S},
         \mathcal{S})\subset\mathrm{Diff}(\mathcal{S},
         \mathcal{S})$   by  $\Psi_\iota(x, u, f)$ $=(\rho_{\iota(x)}, D_{\iota(u)}, -f\triangle)$
          for any $(x, u, f)\in \Gamma(E^{\mathfrak{su}}_0)\oplus\Gamma(E^{\mathfrak{su}}_1)\oplus\Gamma(E^{\mathfrak{su}}_2)
          =\Gamma(E^{\mathfrak{su}})$. We simply denote $\Psi_\iota$   by
        $\Psi$.
    \end{defn}

 \bigskip

  \begin{thm}\label{ttsu11}
   Let $M$ be a Riemannian manifold with its holonomy group inside $SO(n), U(n)$ or $Sp(n)$.
     Then   $\Omega^\bullet(M)$ admits a
   $\mathfrak{su}_\mathbb{K}(1,1)_{sup}$ action with $\mathbb{K}=\mathbb{R},\mathbb{C}$ or $\mathbb{H}$ respectively.
 \end{thm}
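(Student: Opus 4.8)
The plan is to deduce this from the structural results already established. First I would observe that, by Example~\ref{etrivial}, the hypothesis $\mathrm{Hol}(g)\subset SO(n),U(n)$ or $Sp(n)$ is precisely the condition that the Lie superalgebra bundle $E^{\mathfrak{su}}$ be trivial; since $\mathrm{Hol}(g)$ then acts trivially on the fibre $\mathfrak{su}_\mathbb{K}(1,1)_{sup}$, the flat structure agrees with the one given by $\nabla$-parallel sections, so the space of constant sections of $E^{\mathfrak{su}}$ is canonically $\mathfrak{su}_\mathbb{K}(1,1)_{sup}$ and every such section is parallel (in $E^{\mathfrak{su}}$ and, via $\iota$, in $E$). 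Restricting the map $\Psi\colon\Gamma(E^{\mathfrak{su}})\to\mathrm{Diff}(\mathcal{S},\mathcal{S})$, $\Psi(x,u,f)=(\rho_{\iota(x)},D_{\iota(u)},-f\triangle)$ with $\triangle=\{D_{\epsilon_1},D_{\epsilon_2}\}$, to constant sections then produces a linear map $\mathfrak{su}_\mathbb{K}(1,1)_{sup}\to\mathrm{Diff}(\mathcal{S},\mathcal{S})$, and the whole point is to check that it intertwines the three families of super brackets recorded in Remark~\ref{ra}, noting that $\rho_{\iota(x)}$ and $\triangle$ are parity-even operators on $\Omega^\bullet(M)$ while $D_{\iota(u)}$ is parity-odd, so $\mathrm{Diff}(\mathcal{S},\mathcal{S})$ carries the requisite Lie superalgebra structure.

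Next I would verify the three families in turn. For $x,y\in\mathfrak{su}_\mathbb{K}(1,1)$, the relation $[\rho_{\iota(x)},\rho_{\iota(y)}]=\rho_{\iota([x,y])}$ holds because $x\mapsto\rho_{\iota(x)}$ is the composite of the Lie algebra embedding $\iota\colon\mathfrak{su}_\mathbb{K}(1,1)\hookrightarrow\mathfrak{so}(W,Q)$ of Lemma~\ref{lemaa}, the Lie algebra isomorphism $\mathrm{ad}^{-1}\colon\mathfrak{so}(W,Q)\to\mathfrak{spin}(W,Q)$, and the spin representation $\mathfrak{spin}(W,Q)\to\mathrm{End}(S)$, all of which are homomorphisms of Lie algebras. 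For the mixed bracket $[\phi+a,u]=\phi(u)$, Proposition~\ref{psu} gives $[\rho_{\iota(x)},D_{\iota(u)}]=D_{\iota(x)\cdot\iota(u)}-D_{\iota(u)}\iota(x)$, in which the second term vanishes since $\iota(x)$ is $\nabla$-parallel, while $\iota(x)\cdot\iota(u)=\iota(x(u))$ because $\iota\colon\mathfrak{su}_\mathbb{K}(1,1)_{sup}\hookrightarrow\mathcal{L}$ is a morphism of Lie superalgebras (Lemma~\ref{lemaa}, $\mathbb{K}$ associative) and, by Remark~\ref{ra}, the $\mathcal{L}$-bracket of $\iota(x)$ with $\iota(u)$ is exactly the action of $\iota(x)\in\mathfrak{so}(W,Q)$ on $\iota(u)\in\mathrm{Hom}(V^*,W)$; hence $[\rho_{\iota(x)},D_{\iota(u)}]=D_{\iota(x(u))}=\Psi(x(u))$. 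For two odd elements $u,v\in\mathbb{K}^{1,1}$, Proposition~\ref{pde} applies since $E^{\mathfrak{su}}$ is trivial and $D_{\epsilon_2}=d$ satisfies $D_{\epsilon_2}{}^2=0$, so $\{D_{\iota(u)},D_{\iota(v)}\}=2\check q(u,v)\triangle$; since the super bracket $[u,v]=-2\check q(u,v)$ lies in the central $\mathbb{R}$-summand, on which $\Psi(f)=-f\triangle$, this is exactly $\Psi([u,v])$, the sign being absorbed by the normalisation.

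Finally it remains to see that the image $\mathbb{R}\triangle$ of the central summand is central in the image of $\Psi$, i.e. $[\rho_{\iota(x)},\triangle]=0$ and $[D_{\iota(u)},\triangle]=0$ for all constant sections. The former follows from the derivation identity $[\rho_{\iota(x)},\{D_{\epsilon_1},D_{\epsilon_2}\}]=\{[\rho_{\iota(x)},D_{\epsilon_1}],D_{\epsilon_2}\}+\{D_{\epsilon_1},[\rho_{\iota(x)},D_{\epsilon_2}]\}$ together with Propositions~\ref{psu} and~\ref{pde}, which turn the right-hand side into $2\big(\check q(x(\epsilon_1),\epsilon_2)+\check q(\epsilon_1,x(\epsilon_2))\big)\triangle$, and this is $0$ by the defining property of $\mathfrak{su}_\mathbb{K}(1,1)\subset\mathfrak{so}(\mathbb{K}^2,\check q)$. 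The latter is cheaper: the case $u=v$ of Proposition~\ref{pde} gives $D_{\iota(u)}{}^2=\check q(u,u)\triangle$, whence $\check q(u,u)\,[D_{\iota(u)},\triangle]=[D_{\iota(u)},D_{\iota(u)}{}^2]=0$; since $u\mapsto[D_{\iota(u)},\triangle]$ is $\mathbb{R}$-linear and vanishes on the nonempty open set of $\check q$-anisotropic vectors, it vanishes identically. Collecting these computations shows that $\Psi$ restricts to a homomorphism of Lie superalgebras $\mathfrak{su}_\mathbb{K}(1,1)_{sup}\to\mathrm{Diff}(\mathcal{S},\mathcal{S})$, hence a $\mathfrak{su}_\mathbb{K}(1,1)_{sup}$-action on $\Omega^\bullet(M)=\Gamma(\mathcal{S})$, with $\mathfrak{su}_\mathbb{K}(1,1)$ acting by zeroth order operators, $\mathbb{K}^{1,1}$ by first order operators and $\mathbb{R}$ by the Laplacian $\triangle$. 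The substantive inputs are Propositions~\ref{psu} and~\ref{pde}, which are already established; the only place needing genuine care in the write-up is the compatibility of $\iota$ with the module structures, so that $\iota(x)\cdot\iota(u)=\iota(x(u))$ and the centrality of $\triangle$ follow cleanly, the remainder being formal manipulation with the super-Jacobi identity.
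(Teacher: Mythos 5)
Your proposal is correct and follows the paper's own strategy almost verbatim: triviality of $E^{\mathfrak{su}}$ via Example~\ref{etrivial}, identification of $\mathfrak{su}_\mathbb{K}(1,1)_{sup}$ with constant (parallel) sections, the even--even bracket from the spin representation, the even--odd bracket from Proposition~\ref{psu} together with $D_u x=0$ and $\iota(x)\cdot\iota(u)=\iota(x\cdot u)$, the odd--odd bracket from Proposition~\ref{pde}, and the centrality of $\rho_x$ against $\triangle$ by the same derivation-identity computation ending with skew-adjointness of $x$ for $\check q$. The one place you genuinely diverge is the verification that $[D_u,\triangle]=0$: the paper decomposes $u=u_{1\mathrm{r}}+u_{1\mathrm{i}}+u_{2\mathrm{r}}+u_{2\mathrm{i}}$, uses $D_{\epsilon_2}{}^2=0$ (and the relation $D_{\epsilon_1}=(-1)^{m-1}\rho_\nu D_{\epsilon_2}\rho_\nu^{-1}$ behind it) for the real parts, and handles the imaginary parts by writing them as $x\cdot\epsilon_\ell$ and juggling commutators with $\rho_x$; you instead observe that Proposition~\ref{pde} with $u=v$ gives $D_u^2=\check q(u,u)\triangle$, so $\check q(u,u)[D_u,\triangle]=[D_u,D_u^2]=0$, and then extend from the (open, spanning) set of $\check q$-anisotropic constant vectors to all of $\mathbb{K}^{1,1}$ by linearity. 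That polarization/density argument is valid and somewhat cleaner than the paper's case analysis, though it buys only a local simplification; everything else, including the normalisation $\Psi(f)=-f\triangle$ absorbing the sign in $[u,v]=-2\check q(u,v)$, matches the paper's proof.
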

 \begin{remark}
 The $\mathbb{R}$ part of the
    $\mathfrak{su}_\mathbb{K}(1,1)_{sup}$ action consists of $\mathbb{R}\triangle$, where $\triangle=\{D_{\epsilon_1}, D_{\epsilon_2}\}$
     is the  Laplacian   operator $\Delta$ since $D_{\epsilon_1}=d^*$ and $D_{\epsilon_2}=d$ as mentioned in section \ref{op1 sec}.
    Since the $\mathbb{R}$ part is the center of  $\mathfrak{su}_\mathbb{K}(1,1)_{sup}$, the $\mathfrak{su}_\mathbb{K}(1,1)_{sup}$
    action on $\Omega^\bullet(M)$ descends to the cohomology
    $H^*(M)$  by Hodge theory, if $M$ is compact.

 \end{remark}

\begin{proof}[Proof of Theorem \ref{ttsu11}]
     It follows from Example \ref{etrivial} that   $E^{\mathfrak{su}}$ is
     trivial.
     Identify constant sections of $\Gamma(E^{\mathfrak{su}})$ with
     $\mathfrak{su}_\mathbb{K}(1,1)_{sup}$ naturally.
     We need to show  that $\Psi: \mathfrak{su}_\mathbb{K}(1,1)_{sup}\rightarrow
     \mathrm{Diff}(\mathcal{S}, \mathcal{S})$ is an injective Lie superalgebra homomorphism.

     It follows from the
     construction of the operators of order zero that
              $$\Psi([x, y])=\rho_{[x, y]}=[\rho_x, \rho_y]=[\Psi(x), \Psi(y)], \quad\mbox{for any } x, y\in  \mathfrak{su}_\mathbb{K}(1,1).$$
    For any $x\in  \mathfrak{su}_\mathbb{K}(1,1)$ and    $u\in
\mathbb{K}^{1,1}$, $D_ux=0$;   it follows from Proposition \ref{psu}
that
         $$\Psi ([x, u])=D_{x\cdot u}=[\rho_x, D_u]=[\Psi(x), \Psi(u)].$$
     By Proposition \ref{pde}, we have for any $u, v\in
     \mathbb{K}^{1,1}$ that
          $$\Psi([u, v])=\Psi(-2\check q(u, v))=2\check q(u, v)\triangle=\{D_{u}, D_v\}=\{\Psi(u), \Psi(v)\}.$$
     It remains to show that $$[\rho_x+D_u, \triangle]=0,  \quad\mbox{for any } x\in  \mathfrak{su}_\mathbb{K}(1,1)  \mbox{ and any } u\in \mathbb{K}^{1,1}. $$
      In fact,
      $$[\rho_x, \triangle]=\{[\rho_x, D_{\epsilon_1}], D_{\epsilon_2}\}+\{D_{\epsilon_1}, [\rho_x,      D_{\epsilon_2}]\}
                            =2 \check q(x\cdot\epsilon_1, \epsilon_2)\triangle+2\check q({\epsilon_1}, {x\cdot\epsilon_2})\triangle
              =0$$
      Take the decomposition
      $u=u_{1\mathrm{r}}+u_{{1\mathrm{i}}}+u_{2\mathrm{r}}+u_{2\mathrm{i}}$.
      Note that $D_{\epsilon_2}{}^2=0$, it is obvious that
      $[D_{u_{2\mathrm{r}}}, \triangle]=0$.
         We can take $x\in
      \mathfrak{su}_\mathbb{K}(1,1)$ such that $x\cdot
      \epsilon_2=u_{1\mathrm{i}}$ (as we did in the proof of
      Proposition \ref{pde}). Hence,
      $$[D_{u_{1\mathrm{i}}}, \triangle]
              =[[\rho_x, D_{\epsilon_2}], \triangle]=[\rho_x, [D_{\epsilon_2},
              \triangle]]-[D_{\epsilon_2}, [\rho_x, \triangle]]=[\rho_x, 0]-[D_{\epsilon_2},0]=0.$$
       Similarly, we have $ [D_{u_{1\mathrm{r}}+u_{2\mathrm{i}}},
       \triangle]=0$. Hence,
        $$\Psi([x+u, c])=\Psi(0)=0=[\rho_x+D_u, -c\triangle]=[\Psi(x+u), \Psi(c)].$$

Clearly, $\Psi$ is injective; and
        $\Psi(\mathfrak{su}_\mathbb{K}(1,1)_{sup})$, consisting of differential operators, acts on
     $\Omega^\bullet(M)$. Hence,
     $\Omega^\bullet(M)$ admits a
     $\mathfrak{su}_\mathbb{K}(1,1)_{sup}$ action.
\end{proof}

\bigskip

    Note that  the decomposition $\mathbb{K}^{1,1}=\mathbb{R}\epsilon_1\oplus
        \mathrm{Im}\mathbb{K}\epsilon_1\oplus
 \mathbb{R}\epsilon_2\oplus\mathrm{Im}\mathbb{K}\epsilon_2$, induces  a bundle decomposition   $E^{\mathfrak{su}}=E^{\mathfrak{su}}_{1\mathrm{r}}\oplus
        E^{\mathfrak{su}}_{\mathrm{1i}}\oplus E^{\mathfrak{su}}_{\mathrm{2r}}\oplus
        E^{\mathfrak{su}}_{\mathrm{2i}}$ for any normed algebra $\mathbb{K}$.
        Therefore for any   $u\in\Gamma(E^{\mathfrak{su}})$,
        we can write it as
        $u=u_{1\mathrm{r}}+u_{{1\mathrm{i}}}+u_{2\mathrm{r}}+u_{2\mathrm{i}}$.
   Using the same arguments as in the proof of Theorem \ref{ttsu11}, together with   Proposition
   \ref{psu} and Proposition  \ref{pdd}, we have the following
   theorems.

 \begin{thm}\label{tt}
   Let $M$ be an oriented Riemannian manifold. Suppose $M$ is a $\mathbb{K}$-manifold
   with $\mathbb{K}$ being an associative normed algebra.
   Then  $$\sigma\circ\Psi: \Gamma(E^{\mathfrak{su}})\longrightarrow  \mathrm{Symb}(\mathcal{S},
   \mathcal{S})$$
   is a Lie superalgebra monomorphism.
 \end{thm}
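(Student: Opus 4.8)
The plan is to verify that $\sigma\circ\Psi\colon\Gamma(E^{\mathfrak{su}})\to\mathrm{Symb}(\mathcal{S},\mathcal{S})$ respects the super Lie bracket on each of the graded pieces $E^{\mathfrak{su}}_0,E^{\mathfrak{su}}_1,E^{\mathfrak{su}}_2$, and then check injectivity. Since $\sigma$ is already a Lie superalgebra homomorphism, and $\Psi$ lands in $\mathrm{Diff}_0\oplus\mathrm{Diff}_1\oplus\mathrm{Diff}_2$, the composite $\sigma\circ\Psi$ sends a section of $E^{\mathfrak{su}}_k$ to a $k$-th order symbol (for $k=0,1,2$), so it suffices to match the following supercommutators modulo lower-order operators: $[\rho_{\iota(x)},\rho_{\iota(y)}]$, $[\rho_{\iota(x)},D_{\iota(u)}]$, $\{D_{\iota(u)},D_{\iota(v)}\}$, together with the brackets with the central element $f\triangle$. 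The first relation is exact and order-zero: $\rho_{\iota([x,y])}=[\rho_{\iota(x)},\rho_{\iota(y)}]$, from the construction of the zeroth-order operators and the fact that $\iota$ is a Lie algebra morphism on $\mathfrak{su}_\mathbb{K}(1,1)$ (Lemma \ref{lemaa}(2)).

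For the mixed bracket, Proposition \ref{psu} gives $[\rho_{\iota(x)},D_{\iota(u)}]=D_{\iota(x)\cdot\iota(u)}-D_{\iota(u)}\iota(x)$, and since $\iota(x)\cdot\iota(u)=\iota(x\cdot u)$ for $x\in\mathfrak{su}_\mathbb{K}(1,1)$, $u\in\mathbb{K}^{1,1}$ (part of the morphism property of $\iota$), the leading symbol of $[\rho_{\iota(x)},D_{\iota(u)}]$ is the leading symbol of $D_{\iota(x\cdot u)}=\Psi([x,u])$; the correction term $D_{\iota(u)}\iota(x)$ is a zeroth-order operator and hence invisible to $\sigma_1$. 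For the bracket of two first-order operators, Proposition \ref{pdd} states precisely that $\{D_u,D_v\}-2\check q(u,v)\triangle$ is first-order, i.e. $\sigma_2(\{D_{\iota(u)},D_{\iota(v)}\})=\sigma_2(2\check q(u,v)\triangle)$, which is the symbol version of the super bracket $[u,v]=-2\check q(u,v)$ pushed to $E^{\mathfrak{su}}_2$ under $\Psi(f)=-f\triangle$ — one must track the sign and the normalization $\iota(a)=ma$ carefully so the constants match. Finally, brackets involving the central $\mathbb{R}$ summand: since $\triangle$ is second-order with scalar symbol, $[\rho_{\iota(x)},f\triangle]$ and $[D_{\iota(u)},f\triangle]$ are at most second-order, but the relevant claim is only that $\sigma$ of these equals $\sigma$ of $\Psi(0)=0$; this follows because the bracket $[\text{zeroth},\text{second}]$ has order $\le 1$ and $[\text{first},\text{second}]$ has order $\le 2$ with vanishing top symbol (the top symbols commute, being a scalar times a polynomial), so no obstruction arises at the symbol level. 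Injectivity then follows exactly as in the proof of Theorem \ref{ttsu11}: on $E^{\mathfrak{su}}_1$ and $E^{\mathfrak{su}}_0$ the symbols already determine the operators (the symbol map is injective on $\mathrm{Diff}_1$ restricted to operators with no zeroth-order part of the relevant type, and $\sigma_0=\mathrm{id}$ on $\mathrm{Diff}_0$), and on $E^{\mathfrak{su}}_2\cong\mathbb{R}$ the symbol of $f\triangle$ is $f$ times the nonzero symbol of the Laplacian.

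The step I expect to be the main obstacle is verifying the order-two relation $\sigma_2(\{D_{\iota(u)},D_{\iota(v)}\})=2\check q(u,v)\,\sigma_2(\triangle)$ uniformly for $\mathbb{K}=\mathbb{R},\mathbb{C},\mathbb{H}$, i.e. Proposition \ref{pdd} together with the matching of constants. Unlike Proposition \ref{pde}, here $E^{\mathfrak{su}}$ need not be trivial, so one cannot reduce to constant sections and argue purely algebraically; instead one computes the principal symbols directly. The principal symbol of $D_{\iota(u)}$ at a covector $\xi$ is Clifford multiplication by $\iota(u)(\xi)\in T\oplus T^*$ (using the identification of $\mathcal S$ with spinors), so $\sigma_2(\{D_{\iota(u)},D_{\iota(v)}\})(\xi)$ is the anticommutator $\iota(u)(\xi)\cdot\iota(v)(\xi)+\iota(v)(\xi)\cdot\iota(u)(\xi)=2Q\big(\iota(u)(\xi),\iota(v)(\xi)\big)$ acting as a scalar, and one must check this scalar equals $2\check q(u,v)\,|\xi|^2$, which unwinds to the statement that $\iota$ is an isometry from $(\mathbb{K}^{1,1},\check q)$ into $(\mathrm{Hom}(V^*,W),\hat Q)$ up to the expected factor. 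This is essentially a bookkeeping computation in the definition of $\psi_u$ and $\hat Q$, deferred by the authors to the appendix; the plan is to invoke that symbol computation (Proposition \ref{pdd}) and then assemble the graded pieces as above.
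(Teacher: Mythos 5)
Your proposal is correct and follows essentially the same route as the paper, which proves Theorem \ref{tt} by rerunning the argument of Theorem \ref{ttsu11} at the symbol level, with Proposition \ref{psu} handling the mixed bracket (the correction $D_{\iota(u)}\iota(x)$ being zeroth order) and Proposition \ref{pdd} (the appendix symbol computation) replacing Proposition \ref{pde} since $E^{\mathfrak{su}}$ need not be trivial. Your observations that the central brackets vanish automatically at the symbol level because $\sigma_2(f\triangle)$ is a scalar multiple of the identity, and that injectivity follows from faithfulness of the Clifford/spin action together with the grading of $\mathrm{Symb}$, are exactly the intended bookkeeping.
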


  \begin{thm}\label{tb}
   Let $M$ be an oriented Riemannian manifold. Suppose $M$ is a $\mathbb{K}$-manifold
   with $\mathbb{K}$ a normed algebra.
 Then $$\sigma\circ \Psi:
   \iota(\Gamma(E^{\mathfrak{su}}_0))\oplus\big(\iota(\Gamma(E^{\mathfrak{su}}_0))\cdot \iota(\Gamma(E^{\mathfrak{su}}_1))\big)\oplus\iota(\Gamma(E^{\mathfrak{su}}_2))
   \longrightarrow  \mathrm{Symb}(\mathcal{S}, \mathcal{S})$$ is a  Lie superalgebra
   monomorphism.

 \end{thm}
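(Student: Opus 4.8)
The plan is to run the proof of Theorem \ref{ttsu11} again, keeping its structure verbatim, but with two modifications: (i) pass from differential operators to their symbols at every step, so that the exact identity $[\rho_x,\triangle]=0$ used there is replaced by the weaker assertion that $\sigma_2(\triangle)=|\xi|^2\cdot\mathrm{Id}_{\mathcal S}$ is central in $\mathrm{Symb}(\mathcal S,\mathcal S)$; and (ii) restrict attention to the sub-superalgebra
$$A\triangleq\iota(\Gamma(E^{\mathfrak{su}}_0))\oplus\big(\iota(\Gamma(E^{\mathfrak{su}}_0))\cdot\iota(\Gamma(E^{\mathfrak{su}}_1))\big)\oplus\iota(\Gamma(E^{\mathfrak{su}}_2))\subseteq\Gamma(E),$$
on which $\iota$ remains under control even when $\mathbb K$ is non-associative. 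First I would dispose of the associative case: by Lemma \ref{lemaa}(2) the map $\iota$ is then a morphism of Lie superalgebras, $\iota(x)\cdot\iota(u)=\iota(x\cdot u)$ shows $A\subseteq\iota(\Gamma(E^{\mathfrak{su}}))$, and $A$ is visibly closed under the bracket, so Theorem \ref{tb} is the restriction of Theorem \ref{tt} to $A$. Thus the genuine content is $\mathbb K=\mathbb O$, where by Lemma \ref{lemaa}(2) the degree‑one summand $A_1=\iota(\Gamma(E^{\mathfrak{su}}_0))\cdot\iota(\Gamma(E^{\mathfrak{su}}_1))$ equals all of $\Gamma(E_1)$ and $A_2=\iota(\Gamma(E^{\mathfrak{su}}_2))=\Gamma(E_2)$; checking $[A_0,A_0]\subseteq A_0$, $[A_0,A_1]\subseteq A_1$, $[A_1,A_1]\subseteq A_2$ and the centrality of $A_2$ confirms that $A$ is a sub-superalgebra. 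Using Proposition \ref{psu} in the form $D_{\iota(x)\cdot\iota(w)}=[\rho_{\iota(x)},D_{\iota(w)}]+D_{\iota(w)}\iota(x)$, with the last term of order zero, one sees that modulo $\mathrm{Diff}_0$ the image $\Psi(A)$ is spanned by the generating operators $\rho_{\iota(x)}$ ($x\in\Gamma(E^{\mathfrak{su}}_0)$), $D_{\iota(w)}$ ($w\in\Gamma(E^{\mathfrak{su}}_1)$) and $\triangle$.

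Next I would verify that $\sigma\circ\Psi|_A$ preserves the super-bracket, degree by degree. Since $\sigma$ itself is a Lie superalgebra homomorphism, it is enough to show in each case that $\Psi([a,b])-[\Psi(a),\Psi(b)]$ lies in $\mathrm{Diff}_{\deg a+\deg b-1}$. On $A_0$ this is exact, because $\rho$ is a representation of the Lie algebra $\iota(\mathfrak{su}_{\mathbb K}(1,1))$ and zeroth order operators are their own symbols. For $A_0$ against $A_1$ one quotes Proposition \ref{psu}: $[\rho_{\iota(x)},D_u]=D_{\iota(x)\cdot u}-D_u\iota(x)$ with $D_u\iota(x)$ of order zero, matching $\sigma\circ\Psi([\iota(x),u])$. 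For $A_1$ against $A_1$ one first reduces both entries to the generators $D_{\iota(w)}$ via Proposition \ref{psu}, expands the anticommutator using the super-Jacobi identity $[a,\{b,c\}]=\{[a,b],c\}+\{b,[a,c]\}$, and then invokes Proposition \ref{pdd} --- $\{D_{\iota(w)},D_{\iota(w')}\}-2\check q(w,w')\triangle\in\mathrm{Diff}_1$ --- together with the compatibility $\hat Q(\iota(w),\iota(w'))=m\,\check q(w,w')$ of the two quadratic forms, which reflects the normalization $\iota(a)=ma$ on the $\mathbb R$-summand; the successive lower order remainders produced along the way are killed by $\sigma$. Finally, every bracket of $A$ with its center $A_2=\Gamma(E_2)$ vanishes on the source, while on the target $[\,\cdot\,,\sigma_2(\triangle)]=0$ because $\sigma_2(\triangle)=|\xi|^2\cdot\mathrm{Id}_{\mathcal S}$ is scalar. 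Injectivity of $\sigma\circ\Psi|_A$ is then checked summand by summand --- legitimate since the three summands sit in degrees $0,1,2$ of $\mathrm{Symb}(\mathcal S,\mathcal S)$ --- on $A_0$ because the spinor representation of $\mathfrak{spin}(W,Q)$ is faithful; on $A_1$ because the principal symbol of $D_u$ is, up to a universal constant, the endomorphism $\varphi\mapsto u(\xi)\cdot\varphi$ given by Clifford multiplication by $u(\xi)\in T\oplus T^*$, and distinct elements of $W$ act distinctly, so $u$ is recovered; on $A_2$ because $\sigma_2(\triangle)\neq0$.

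I expect the main obstacle to be the $A_1$-against-$A_1$ step in the octonionic case. Because $\iota$ is \emph{not} a morphism of Lie superalgebras for $\mathbb O$, one cannot simply import Theorem \ref{tt}; one must genuinely expand $\{D_{\iota(x_1)\cdot\iota(w_1)},D_{\iota(x_2)\cdot\iota(w_2)}\}$ by repeated use of Proposition \ref{psu} and the super-Jacobi identity, exactly mimicking the case analysis in the proof of Theorem \ref{ttsu11} but now with \emph{non-constant} sections, so that the order-zero errors $D_w x$ and the error $[\rho_x,\triangle]$ no longer vanish and must be carried along and shown to be annihilated by $\sigma$ --- continuing until only the building-block anticommutators $\{D_{\iota(w)},D_{\iota(w')}\}$ with $w,w'\in\Gamma(E^{\mathfrak{su}}_1)$ remain, at which point Proposition \ref{pdd} closes the computation. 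Verifying that every intermediate error term really has order at most $\deg a+\deg b-1$ is the delicate point, and it is precisely the bookkeeping the paper has in mind when it warns that the relations for $\mathbb O$-manifolds are ``more involved.''
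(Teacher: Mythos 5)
Your overall strategy is the one the paper itself gestures at: its entire justification of Theorem \ref{tb} is the sentence ``using the same arguments as in the proof of Theorem \ref{ttsu11}, together with Proposition \ref{psu} and Proposition \ref{pdd},'' so your associative-case reduction to Theorem \ref{tt} and your plan of rerunning the Theorem \ref{ttsu11} computation at the symbol level match the intended route. The test is therefore whether your plan for the only genuinely new case, $\mathbb K=\mathbb O$, can actually be carried out, and there the step you defer to ``bookkeeping'' is not bookkeeping --- as described, it fails.

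Concretely, for the $A_1$-against-$A_1$ bracket you must prove $\sigma_2(\{D_w,D_{w'}\})=\tfrac2m\hat Q(w,w')\,\sigma_2(\triangle)$ for all $w,w'$ in the degree-one summand. Unwinding the appendix computation, this says that the symmetrization of the matrix $Q\bigl(w(dy^j),w'(dy^k)\bigr)$ equals $\tfrac1m\hat Q(w,w')\delta_{jk}$; that ``isotropy'' property is exactly the line $Q(\iota(u)(dy^j),\iota(v)(dy^k))=\check q(u,v)\delta_{jk}$ on which the proof of Proposition \ref{pdd} rests, and it is special to $\iota(\mathbb K^{1,1})$ --- it is destroyed by composing with general elements $\iota(x)$. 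Since you identify $A_1=\Gamma(E_1)$ via Lemma \ref{lemaa}(2), and since the required identity is bilinear in $(w,w')$, it would have to hold for every $w\in\Gamma(E_1)$; but for $w=w'=\frac{\partial}{\partial y_1}\otimes\bigl(\frac{\partial}{\partial y_1}+dy^1\bigr)$ one computes $\sigma_2(\{D_w,D_w\})=-2\xi_1^2\,\mathrm{Id}$, whereas $\hat Q(w,w)=1$ and $\sigma_2\bigl(\tfrac2m\triangle\bigr)=-\tfrac2m|\xi|^2\,\mathrm{Id}$, so the two sides differ whenever $m>1$; with your identification, $\sigma\circ\Psi(A_1)$ is not even bracket-closed into $\sigma\circ\Psi(A_2)$. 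Consequently no amount of reduction to generators via Proposition \ref{psu} and the super-Jacobi identity can terminate in the claimed relation: Proposition \ref{pdd} only controls pairs from $\iota(\Gamma(E^{\mathfrak{su}}_1))$, and the recursion you sketch keeps producing pairs outside that subspace with no estimate on their anticommutators. To close the argument you would have to either interpret the degree-one summand in Theorem \ref{tb} as a strictly smaller subspace of $\Gamma(E_1)$ on which the isotropy property $Q(w(dy^j),w'(dy^k))\propto\delta_{jk}$ persists (incompatible with reading Lemma \ref{lemaa}(2) as you do), or replace Proposition \ref{pdd} by a genuinely new computation for products $\iota(x)\cdot\iota(u)$; neither appears in your proposal, and the paper's one-line proof does not resolve this tension either.
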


\section{$\mathfrak{su}_{\mathbb{K}'}(2,2)_{sup}$-action for Semi-flat Calabi-Yau and hyperk\"aher manifolds}
Mirror symmetry is a highly nontrivial duality transformation for
 Calabi-Yau manifolds (i.e. special $\mathbb{C}$-manifolds) and
 hyperk\"ahler manifolds (i.e. special
 $\mathbb{H}$-manifolds).
   From the SYZ proposal \cite{SYZ}, mirror
 Calabi-Yau manifolds should admit special Lagrangian fibrations,
 which becomes semi-flat in the large complex structure limit.  Indeed, most Calabi-Yau manifolds which are studied in mirror symmetry are hypersurfaces,
  or more generally, complete intersections, in toric varieties. The conjectural limiting semi-flat structures are expected to come from the toric actions on the ambient toric varieties.
 The hard Lefschetz action should also have a mirror version,
 as it was discussed in \cite{leung2}. We conjecture that this
 mirror hard Lefschetz action should be closely related to the Schmid
 $SL_2$-orbit theorem for the large complex structure
 degeneration.

  Putting both $\mathfrak{su}(1,1)$ actions together, we have a
  $\mathfrak{su}(1,1)\oplus
    \mathfrak{su}(1,1)=\mathfrak{so}(2,2)$ action on differential
    forms on semi-flat Calabi-Yau manifolds. We are going to explain
    this enlarged (super) hard Lefschetz action below.
    In this article, we use the following definition of
    semi-flatness.

    \begin{defn}\label{semi}
      A   $\mathbb{K}$-manifold is called semi-flat if its
      holonomy group can be reduced from $G_\mathbb{K}$ to
      $G_{\mathbb{K}'}^\circ$, the connected component of  $G_{\mathbb{K}'}$. Here $\mathbb{K}'$ means $\mathbb{R}$ or
      $\mathbb{C}$ when $\mathbb{K}$ equals $\mathbb{C}$ or
      $\mathbb{H}$ respectively.
    \end{defn}

    For example, given any (open) Calabi-Yau manifold $M$ of real dimension $2n$ with a free Hamiltonian $T^n$-action preserving the Calabi-Yau structure. The
    K\"ahler potential $\varphi$ of $M$ can be descended to a function on the quotient manifold $B=M/T^n$ and induces a Riemannian metric $g_B$ of Hessian type on $B$, namely
     $g_B=\nabla^2B$, where the Hessian $\nabla^2B$ is computed  with respect to the canonical affine structure on $B$ induced from the Lagrangian fibration structure on $M$. Furthermore, the holonomy group of $M$ and
     $B$ are the same. Thus the holonomy group of $M$ is inside $SU(n)\cap GL(n,\mathbb{R})=SO(n)$, and therefore $M$ is a semi-flat Calabi-Yau manifold as in Definition \ref{semi}.
     There are similar constructions (namely $T^n$-invariant hyperk\"ahler manifolds in \cite{leung2}) for a class of semi-flat hyperk\"ahler manifolds. Topologically, they are always
    products of domains in $\mathbb{R}^n$ with tori. Despite such severe restrictions on their geometry, they are expected to arise naturally in the large complex structure limit
    and play an important role in mirror symmetry as indicated in the SYZ proposal.

       The tangent bundle $\mathbb{K}^n\rightarrow
       T\rightarrow M$ of  a semi-flat $\mathbb{K}$-manifold $M$ is the
       complexification of another bundle $(\mathbb{K'})^n\rightarrow
       T'\rightarrow M$.

       Recall when $V\cong \mathbb{K}^n$ then $V\oplus V^*$ with the
       canonical quadratic form $Q$ identifies it with
       $\mathbb{K}^{n}\otimes_\mathbb{K}\mathbb{K}^{1,1}$. Thus $
       \mathfrak{su}_\mathbb{K}(1,1)$  acts on $V\oplus V^*$ and its
       spinor representation $S=\bigwedge^\bullet V^*$. Now $V\cong
       V'\otimes_\mathbb{R}\mathbb{C}$ with  $V'\cong
       (\mathbb{K'})^n$. By same reasonings, we have
          $$V\oplus V^*\cong (\mathbb{K'})^{n}\otimes_{\mathbb{K}'}(\mathbb{K}')^{2,2}$$
     Thus we obtain a $ \mathfrak{su}_{\mathbb{K'}}(2,2)$ action on $(V\oplus V^*, Q)$,
       and therefore also on its spinor representation $S=\bigwedge^\bullet V^*$.
       Furthermore this action commutes with the natural
       $\mathfrak{u}_{\mathbb{K}'}(n)$ action. Therefore, we obtain
       a $ \mathfrak{su}_{\mathbb{K'}}(2,2)$ action on the space of
       differential forms on a semi-flat $\mathbb{K}$-manifold $M$.
       One can check directly that for semi-flat Calabi-Yau
       manifolds, this
       $\mathfrak{so}(2,2)=\mathfrak{sl}(2,\mathbb{R})\oplus
    \mathfrak{sl}(2, {\mathbb{R}})$  action corresponds to the hard
    Lefschetz action and its mirror action as defined in
    \cite{leung2} (see also \cite{caozhou}).

             To see these Lie algebras concretely, we note that
             $\mathfrak{su}_{\mathbb{K}'}(2,2)\cong\mathfrak{so}(\dim
             \mathbb{K},2)$.
         \begin{align*}
               &\mathfrak{su}_\mathbb{C}(1,1)=\mathfrak{so}(2,1)\subset\mathfrak{su}_{\mathbb{R}}(2,2)=\mathfrak{so}(2, 2)\\
               &\mathfrak{su}_\mathbb{H}(1,1)=\mathfrak{so}(4,1)\subset\mathfrak{su}_{\mathbb{C}}(2,2)=\mathfrak{so}(4, 2).
         \end{align*}

    Clearly
    $\mathfrak{su}_{\mathbb{K}'}(2,2)_{sup}=\mathfrak{su}_{\mathbb{K}'}(2,2)\oplus
    (\mathbb{K}')^{2,2}\oplus \mathbb{R}$ is naturally a Lie
    superalgebra, which includes
    $\mathfrak{su}_\mathbb{K}(1,1)_{sup}$ as a super Lie subalgebra.
   Thus, the real vector space $(\mathbb{K}')^{2,2}\oplus \mathbb{R}=\mathbb{K}^{1,1}\oplus \mathbb{R}$
   acts on $\Omega^\bullet(M)$ via
      differential operators of order one and two. Together with the  $\mathfrak{su}_{\mathbb{K}'}(2,2)$ action,
      which extends the  $\mathfrak{su}_{\mathbb{K}}(1,1)$ action, it gives a
      Lie superalgebra $\mathfrak{su}_{\mathbb{K}'}(2,2)_{sup}$ action on  $\Omega^\bullet(M)$.

       In conclusion, we have obtained the following result for semi-flat Calabi-Yau and hyperk\"ahler manifolds.
     \begin{thm}
       Suppose that $M$ is a semi-flat $\mathbb{K}$-manifold with
       $\mathbb{K}$ being $\mathbb{C}$ or $\mathbb{H}$, then there
       is a natural $\mathfrak{su}_{\mathbb{K}'}(2,2)_{sup}$ action,
       extending the super hard Lefschetz $\mathfrak{su}_{\mathbb{K}}(1,1)_{sup}$ action,
       on the space of differential forms on $M$ via differential
       operators of order at most two.
     \end{thm}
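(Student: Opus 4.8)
The plan is to mirror the proof of Theorem \ref{ttsu11}, upgrading every ingredient from the $\mathbb{K}$-level to the $\mathbb{K}'$-level. Since $M$ is semi-flat, its holonomy reduces to $G^\circ_{\mathbb{K}'}$, so the frame bundle reduces to a principal $G^\circ_{\mathbb{K}'}$-bundle $P'$, and on this bundle the construction of section \ref{sec q} applies verbatim with $V\cong(\mathbb{K}')^n$ and $W=V\oplus V^*$. First I would record the algebraic identification $V\oplus V^*\cong(\mathbb{K}')^n\otimes_{\mathbb{K}'}(\mathbb{K}')^{2,2}$ as a $G^\circ_{\mathbb{K}'}$-equivariant isometry (where $(\mathbb{K}')^{2,2}$ carries the appropriate split quadratic form), exactly as $V\oplus V^*\cong\mathbb{K}^n\otimes_\mathbb{K}\mathbb{K}^{1,1}$ in section \ref{sec q2}; this yields an embedding $\iota':\mathfrak{su}_{\mathbb{K}'}(2,2)_{sup}\hookrightarrow\mathcal L$ of Lie superalgebras (associativity of $\mathbb{K}'\in\{\mathbb{R},\mathbb{C}\}$ makes this clean), together with $G^\circ_{\mathbb{K}'}$-invariance of its image, giving a trivial Lie superalgebra subbundle $E'^{\mathfrak{su}}\subset E$ with constant fiber $\mathfrak{su}_{\mathbb{K}'}(2,2)_{sup}$. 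Triviality is the analogue of Example \ref{etrivial}: the $G^\circ_{\mathbb{K}'}$-action on $\mathfrak{su}_{\mathbb{K}'}(2,2)_{sup}$ is trivial because $G^\circ_{\mathbb{K}'}$ acts on $(\mathbb{K}')^{2,2}$ only through its $\mathbb{K}'$-linear structure, which commutes with the $\mathfrak{su}_{\mathbb{K}'}(2,2)$-action.

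Next I would feed the constant sections of $E'^{\mathfrak{su}}$ through the maps $\Psi$ of section 3: zeroth-order operators $\rho_x$ for $x\in\mathfrak{so}(W,Q)$-part, first-order operators $D_u$ for $u\in(\mathbb{K}')^{2,2}$ via the Levi-Civita connection (still a $G^\circ_{\mathbb{K}'}$-connection), and the second-order operator $\triangle=\{D_{\epsilon_1},D_{\epsilon_2}\}$ for the central $\mathbb{R}$. The point is that all the commutator identities needed — $[\rho_x,\rho_y]=\rho_{[x,y]}$, Proposition \ref{psu} giving $[\rho_x,D_u]=D_{x\cdot u}$ for constant sections (since $D_ux=0$), Proposition \ref{pdd}/\ref{pde} giving $\{D_u,D_v\}=2\check q(u,v)\triangle$, and the centrality $[\rho_x+D_u,\triangle]=0$ — are proved in the excerpt in a way that only uses that the relevant sections are constant and that $\iota$ is a Lie superalgebra embedding; none of them uses that $\mathbb{K}$ rather than $\mathbb{K}'$ is the controlling algebra. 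So I would invoke them directly with $\mathbb{K}'$ in place of $\mathbb{K}$ and $(2,2)$ in place of $(1,1)$, after checking that the split form $\check q$ on $(\mathbb{K}')^{2,2}$ is the restriction of $\hat Q$ under $\iota'$ and that the four $\epsilon$-vectors spanning the null pieces still satisfy $D_{\epsilon_1}=d^*$, $D_{\epsilon_2}=d$ (hence $D_{\epsilon_j}^2=0$) — this last is where semi-flatness is genuinely used, since it is what makes $d,d^*$ sit inside the larger $(2,2)$-package.

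Then the conclusion is assembled exactly as in the proof of Theorem \ref{ttsu11}: $\Psi:\mathfrak{su}_{\mathbb{K}'}(2,2)_{sup}\to\mathrm{Diff}(\mathcal S,\mathcal S)$ is a Lie superalgebra homomorphism by the above identities, it is injective because $\rho$ is faithful on the zeroth-order part and the symbol map separates the orders, and it restricts on the subalgebra $\mathfrak{su}_{\mathbb{K}}(1,1)_{sup}\subset\mathfrak{su}_{\mathbb{K}'}(2,2)_{sup}$ to the super hard Lefschetz action of Theorem \ref{ttsu11}, because the embedding $\iota$ factors through $\iota'$ by construction. Finally I would remark that the extra operators — those attached to $(\mathbb{K}')^{2,2}\ominus\mathbb{K}^{1,1}$ and to the $\mathfrak{su}_{\mathbb{K}'}(2,2)\ominus\mathfrak{su}_{\mathbb{K}}(1,1)$ directions — are precisely the "mirror" $d,d^*,L,\Lambda$ and their superpartners, so on a compact $M$ the $\mathfrak{su}_{\mathbb{K}'}(2,2)$ part descends to $H^*(M)$ with only the central $\mathbb{R}$ acting trivially, recovering the $\mathfrak{so}(2,2)$ (resp. $\mathfrak{so}(4,2)$) action of \cite{leung2,caozhou}.

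The main obstacle is Step 2's compatibility bookkeeping: one must check that under the two different tensor identifications $V\oplus V^*\cong\mathbb{K}^n\otimes_\mathbb{K}\mathbb{K}^{1,1}$ and $V\oplus V^*\cong(\mathbb{K}')^n\otimes_{\mathbb{K}'}(\mathbb{K}')^{2,2}$, the split quadratic forms match up to the same $Q$, and that the central element $\triangle$ defined from the $(\mathbb{K}')$-picture coincides with the Laplacian (equivalently, with the $\triangle$ from the $\mathbb{K}$-picture) — otherwise the two superalgebra actions would have incompatible centers and would not glue. Everything else is a mechanical transcription of section 3 with the substitution $(\mathbb{K},1,1)\rightsquigarrow(\mathbb{K}',2,2)$, and the semi-flatness hypothesis is exactly the statement that this transcription is legitimate, i.e. that $\mathrm{Hol}(g)$ is small enough for the enlarged trivial subbundle $E'^{\mathfrak{su}}$ to exist.
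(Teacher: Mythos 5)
Your proposal is correct and follows essentially the same route as the paper: use the holonomy reduction to $G^\circ_{\mathbb{K}'}$ and the identification $V\oplus V^*\cong(\mathbb{K}')^{n}\otimes_{\mathbb{K}'}(\mathbb{K}')^{2,2}$ to get holonomy-invariant (hence globally defined) zeroth-order operators from $\mathfrak{su}_{\mathbb{K}'}(2,2)$, and then verify the super brackets with the section 3 machinery (Propositions \ref{psu} and \ref{pde}) applied to constant sections. The only difference is one of economy: the paper avoids re-running the $(2,2)$-analogues of these propositions by observing that $(\mathbb{K}')^{2,2}\oplus\mathbb{R}=\mathbb{K}^{1,1}\oplus\mathbb{R}$ as quadratic spaces, so the odd and central operators are literally the $D_u$, $d$, $d^*$ and $\triangle$ already constructed, which is exactly the compatibility check you flag as the main bookkeeping obstacle.
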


\section{Appendix}

\subsection{$\mathfrak{su}_\mathbb{K}(1, 1)\cong \mathfrak{so}(\dim\mathbb{K}, 1)$}

  There is a natural isomorphism $\tau_*: \mathfrak{sl}(2, \mathbb{K})\overset{\cong}{\longrightarrow}
     \mathfrak{so}(\mathbb{R}^{1,1}\oplus\mathbb{K})$. One can refer to \cite{baez}
     for the  geometric meaning of the isomorphism.       Furthermore,
     we have
      $$\tau_*|_{\mathfrak{su}_\mathbb{K}(1,1)}: \mathfrak{su}_\mathbb{K}(1,1)\overset{\cong}{\longrightarrow}\mathfrak{so}(\mathbb{R}^{1,1}\oplus\mathrm{Im}\mathbb{K})
     \cong \mathfrak{so}(\dim_\mathbb{R}\mathbb{K}, 1).$$
  We will write down it more explicitly for the case $\mathbb{K}$ is associative.
     Identify $ \mathbb{R}^{1,1}\oplus\mathbb{K}$ with
   $\mathfrak{h}_2(\mathbb{K})$, the hermitian $2\times 2$ matrices with entries in $\mathbb{K}$, via the map  $( \alpha, \beta , x)\mapsto
             \left(\begin{array}{cc}   \alpha+\beta &x \\
                           \bar x &\alpha-\beta \\\end{array}\right)$,
     where $ \alpha, \beta\in \mathbb{R},\,x\in  \mathbb{K}.$
  Then there is a
   double cover  given by
    $  \tau: SL(2,\mathbb{K})\longrightarrow
              SO^+(\mathbb{R}^{1,1}\oplus\mathbb{K});\;
             A \mapsto \tau_A $,
                 where
         \begin{align*} \tau_A:& \quad  \mathbb{R}^{1,1}\oplus\mathbb{K} \longrightarrow
\mathbb{R}^{1,1}\oplus\mathbb{K};\\
                     &\Bigg(\begin{array}{cc}
                 \alpha+\beta &x \\
               \bar x &\alpha-\beta \\\end{array}\Bigg) \mapsto A \Bigg(\begin{array}{cc}
                 \alpha+\beta &x \\
                  \bar x &\alpha-\beta \\\end{array}\Bigg)A^\star.
          \end{align*}
  Therefore, it induces an isomorphism $\tau_*$ of Lie algebras.

  For the associative  normed algebra $\mathbb{K}$, the natural inclusion
   $\mathbb{R}^{1,1}\oplus
     \mathrm{Im}\mathbb{K}\hookrightarrow\mathbb{R}^{1,1}\oplus
     \mathrm{Im}\mathbb{H}\hookrightarrow  \mathbb{R}^{1,1}\oplus
     \mathbb{H}$, induces an embedding of $\mathfrak{so}(\mathbb{R}^{1,1}\oplus\mathrm{Im}\mathbb{K})$ into $\mathfrak{so}(1, 1+\dim_\mathbb{R}\!\mathbb{H})$
   naturally. Therefore we only write down  $\tau_*( \mathfrak{su}_\mathbb{H}(1,1))$
     explicitly.
    Let $E_{ij}$ be the matrix with $1$ in the $(i, j)$th entry and 0 elsewhere.
    Take the basis of
         $\mathfrak{su}_\mathbb{H}(1,1)$ as
       in  section \ref{opall}, then we have
       \begin{align*}
         \tau_*(L_s)&=
         E_{1(3+s)}+E_{(3+s)1}-E_{2(3+s)}+E_{(3+s)2},\,\, s=1,2,3;\\
          \tau_*(\Lambda_s)&=E_{1(3+s)}+E_{(3+s)1}+E_{2(3+s)}-E_{(3+s)2} ,\,\, s=1,2,3;\\
         \tau_*(K_1)&= 2(E_{65}-E_{56}),\qquad\quad \tau_*(K_2)=
         2(E_{46}-E_{64}) ;\\
         \tau_*(K_3)&= 2(E_{54}-E_{45}),\qquad\quad\,\,\, \tau_*(H)=  2(E_{12}+E_{21}).
       \end{align*}

\subsection{Proof of Proposition \ref{pdd}}
  We use the notation of $k$-symbol $\sigma_k$ as in \cite{wells} for
       differential operators.

       For any $p\in M$, let $(y_1, \cdots, y_m)$ be a normal coordinate system around  $p$.
     Then for any $u\in \Gamma(E^{\mathfrak{su}}_1)$, $D_u=\sum_{j=1}^m\iota(u)(dy^j)\cdot\nabla_{\!\!{\partial \over \partial y_j}}$.

      For any $\xi\in  T^*_pM$ and any $\varphi\in \bigwedge^\bullet T^*_pM$, take
       $g\in \Omega^0(M)$ and $s\in \Gamma(\mathcal{S})$ such that
       $dg(p)=\xi$ (i.e. $\sum_j{\partial g \over \partial y_j}(p)dy^j=\xi$) and $s(p)=\varphi$, then we have $\sigma_k(D_u)(p,
           \xi)\varphi=0$ for any $k\geq 2$, and

   $$\sigma_1(D_u)(p,
           \xi)\varphi=\Big(D_u\big({i\over 1!}(g-g(p))s\big)\Big)(p)
           =\sum_{j=1}^m\iota(u)(dy^j)\cdot{\partial g \over \partial y_j}(p)\varphi.  $$
    In particular, $\sigma_1(D_{\epsilon_2})(p,
           \xi)\varphi= dy^j \cdot{\partial g \over \partial y_j}(p)\varphi$ and $\sigma_1(D_{\epsilon_1})(p,
           \xi)\varphi= {\partial \over \partial y_j} \cdot{\partial g \over \partial y_j}(p)\varphi$.
   Hence,  $$\sigma_2(\triangle)(p,
           \xi)\varphi=\sum_{j, k} {\partial g \over \partial y_j}(p){\partial g \over \partial y_k}(p)(dy^j \cdot
             {\partial \over \partial y_k} +{\partial \over \partial y_k} \cdot dy^j)\cdot
             \varphi= -\sum_{j=1}^m \big({\partial g \over \partial y_j}(p)\big)^2 \varphi.$$
      On the other hand,
         \begin{align*}&\sigma_2(\{D_u, D_v\})(p, \xi)\varphi\\
                 =&\sum_{j, k}\iota(u)(dy^j)\cdot{\partial g \over \partial y_j}(p)\iota(v)(dy^k)\cdot{\partial g \over \partial y_k}(p)\varphi
                    \!\!+\!\!\sum_{j, k}\iota(v)(dy^j)\cdot{\partial g \over \partial y_j}(p)\iota(u)(dy^k)\cdot{\partial g \over \partial y_k}(p)\varphi
                         \\
                =&\sum_{j, k}{\partial g \over \partial y_j}(p){\partial g \over \partial
                y_k}(p)\big(\iota(u)(dy^j)\cdot\iota(v)(dy^k)
                    + \iota(v)(dy^k)\cdot \iota(u)(dy^j)\big)\cdot\varphi
                         \\
                 =&\sum_{j, k}-{\partial g \over \partial y_j}(p){\partial g \over \partial
                y_k}(p) \cdot 2Q\big(\iota(u)(dy^j), \iota(v)(dy^k) \big)\cdot\varphi
                         \\
                  =&\sum_{j, k}-2{\partial g \over \partial y_j}(p){\partial g \over \partial
                y_k}(p)\cdot \check q(u, v)\delta_{jk}\cdot\varphi
                         \\
                    =&-2\check q(u, v)\sum_{j=1}^m\big({\partial g \over \partial
                    y_j}(p)\big)^2\cdot\varphi.
        \end{align*}

   Hence, $\sigma_2(\{D_u, D_v\})(p, \xi)\varphi=\sigma_2(2\check q(u, v)\triangle)(p,
           \xi)\varphi$.

Therefore, $\sigma_2(\{D_u, D_v\}-2\check q(u, v)\triangle)=0.$

 Since $D_u$ and $D_v$ are of   order one, $\{D_u, D_v\}-2\check q(u,
   v)\triangle$ is of order   at most two. Therefore,
   $\{D_u, D_v\}-2\check q(u, v)\triangle$ is a first order
   operator.

\subsection{\label{opall} Identifying $\mathfrak{su}_\mathbb{K}(1,1)_{sup}$ with the usual hard Lefschetz actions}
    We can reinterpret those operators in the $\mathfrak{su}_\mathbb{K}(1,1)_{sup}$ action in Theorem \ref{ttsu11} as
    follows.

      \bigskip

    \noindent Case $\mathbb{K}=\mathbb{R}$:

          In this case, $G_\mathbb{K}^\circ=SO(n)$ and $M$ is an oriented
        Riemannian manifold. Furthermore we have
        $\mathfrak{su}_\mathbb{R}(1,1)_{sup}=\mathbb{R}h\oplus \mathbb{R}^{1,1}\oplus \mathbb{R}$, where
      $h=\Big(\begin{array}{cc}1&0\\0&-1\end{array}\Big)$, so that
    \vspace{-0.2cm}

          $$\rho_h|_{\Omega^p(M)}=({m\over
          2}-p)\mathrm{Id},\quad
         D_{\epsilon_2}=d,\quad D_{\epsilon_1}=d^*,\quad \Psi(1)=-\Delta.
          $$


    \noindent Case $\mathbb{K}=\mathbb{C}$:

          In this case, $G_\mathbb{K}^\circ=G_\mathbb{K}=U(n)$ and $M$ is a  K\"ahler
          manifold with K\"ahler form $\omega$. Moreover, we have
            $\mathfrak{su}_\mathbb{C}(1,1)_{sup}=\mathfrak{su}_\mathbb{C}(1,1)\oplus \mathbb{C}^{1,1}\oplus \mathbb{R}$
            with
   $\mathfrak{su}_\mathbb{C}(1,1)=\mbox{Span}_\mathbb{R}\{L, \Lambda, H\}$,  where
   $$ L={\scriptstyle \left(\begin{array}{cc} 0 & 0  \\
                             -\sqrt{-1} & 0
                \\\end{array}\right)}\,,\, \Lambda={\scriptstyle\left(\begin{array}{cc} 0 & \sqrt{-1}  \\
                             0  & 0
                \\\end{array}\right)}\,,\,H={\scriptstyle\left(\begin{array}{cc} -1 & 0  \\
                              0   & 1
                \\\end{array}\right)}.$$

  And we have $\rho_L=\omega\wedge$, $\rho_\Lambda=\rho_L^*$ and $\rho_H=[\rho_L, \rho_\Lambda]$, which
   are exactly  those defining the hard
  Lefschetz action on K\"ahler manifolds \cite{gri} (see also \cite{li} for more
details). On complex valued differential forms, we have
$$ \begin{array}{lcl} D_{\epsilon_2}=d=\partial+\bar\partial, &  & D_{\sqrt{-1}\epsilon_2}=\sqrt{-1}(\bar\partial-\partial),\\
                         D_{\epsilon_1}=d^*=\partial^*+\bar\partial^*,&  & D_{\sqrt{-1}\epsilon_1}=\sqrt{-1}(\bar\partial^*-\partial^*),\\
                         \{D_{\epsilon_1}, D_{\epsilon_2}\}=\{D_{\sqrt{-1}\,\epsilon_1}, D_{\sqrt{-1}\,\epsilon_2}\}=\Delta=-\Psi(1).& &\\
                    \end{array}
           $$


    \noindent Case $\mathbb{K}=\mathbb{H}$:

          In this case, $G_\mathbb{K}^\circ=G_\mathbb{K}=Sp(n)$ and $M$ is a  hyperk\"ahler
          manifold. Furthermore,
  $\mathfrak{su}_\mathbb{H}(1,1)=\Big\{\Bigg(\begin{array}{cc} \beta_1
& \beta_2\\\beta_3 &-\bar\beta_1
 \end{array}\Bigg) ~\Big|~ \beta_1\in\mathbb{H},\,\, \beta_2,\beta_3\in
 \mbox{Im}\mathbb{H} \Big\}$ is ten dimensional, and is spanned  by $\{L_s, \Lambda_s, K_s, H~|~ s=1,2,3\}$, where
   $$ L_s={\scriptstyle \left(\begin{array}{cc} 0 & 0  \\
                              -J_s& 0
                \\\end{array}\right)}\,,\, \Lambda_s={\scriptstyle\left(\begin{array}{cc} 0 & J_s  \\
                             0  & 0
                \\\end{array}\right)},K_s={\scriptstyle \left(\begin{array}{cc} J_s & 0  \\
                              0& J_s
                \\\end{array}\right)}\,, H={\scriptstyle \left(\begin{array}{cc} -1 & 0  \\
                             0  & 1
                \\\end{array}\right)}$$
  and $J_1^2=J_2^2=J_3^2=J_1J_2J_3=-1$.

  $\rho_{L_s}$ is exactly the
same as the operator ``$\omega_s\wedge$" where $\omega_s$ is the
K\"ahler form with respect to the complex structure $J_s$, and
$\rho_{\Lambda_s}$ is  the adjoint operator of $\rho_{L_s}$ for each
$s$. Moreover, for each $s\in\{1, 2, 3\}$,
$$ \begin{array}{lcl} D_{\epsilon_2}=d=\partial_s+\bar\partial_s, &  & D_{J_s\epsilon_2}=\sqrt{-1}(\bar\partial_s-\partial_s),\\
                         D_{\epsilon_1}=d^*=\partial_s^*+\bar\partial_s^*,&  & D_{J_s\epsilon_1}=\sqrt{-1}(\bar\partial_s^*-\partial_s^*),\\
                         \{D_{\epsilon_1}, D_{\epsilon_2}\}=\{D_{J_s\epsilon_1}, D_{J_s\epsilon_2}\}=\Delta=-\Psi(1).& &\\
                    \end{array}
           $$

\section*{Acknowledgements}
 The authors   thank the referee
for useful suggestions  on an earlier version of this paper.
     The first author is supported in part by      RGC Grant CUHK2160256 from the Hong Kong Government.

\bibliographystyle{mrl}

}
\end{document}